\newtheorem{thm}{Theorem}[section]
\newtheorem{lem}[thm]{Lemma}
\newtheorem{cor}[thm]{Corollary}
\newtheorem{pro}[thm]{Proposition}
\newtheorem{ex}[thm]{Example}
\theoremstyle{definition}
\newtheorem{rmk}[thm]{Remark}
\newtheorem{defi}[thm]{Definition}
\newcommand{\nc}{\newcommand}
\newcommand{\delete}[1]{}
\nc{\mlabel}[1]{\label{#1}}  
\nc{\mcite}[1]{\cite{#1}}  
\nc{\mref}[1]{\ref{#1}}  
\nc{\mbibitem}[1]{\bibitem{#1}} 
\nc{\mlabel}[1]{\label{#1}{\hfill \hspace{1cm}{\bf{{\ }\hfill(#1)}}}}
\nc{\mcite}[1]{\cite{#1}{{\em{{\ }(#1)}}}}  
\nc{\mref}[1]{\ref{#1}{{\em{{\ }(#1)}}}}  
\nc{\mbibitem}[1]{\bibitem[\em #1]{#1}} 
\newcommand {\emptycomment}[1]{}
\nc{\oprn}{\theta}
\nc{\Oprn}{\Theta}
\nc{\calo}{\mathcal{O}}
\nc{\oop}{$\mathcal{O}$-operator\xspace}
\nc{\oops}{$\mathcal{O}$-operators\xspace}
\nc{\mrho}{{\bm{\varrho}}}
\nc{\emk}{\mathbf{K}}
\nc{\invlim}{\displaystyle{\lim_{\longleftarrow}}\,}
\nc{\ot}{\otimes}
\newcommand{\lon }{\,\rightarrow\,}
\newcommand{\be }{\begin{equation}}
\newcommand{\ee }{\end{equation}}
\newcommand{\m}{\mathfrak m}
\newcommand{\g}{\mathfrak g}
\newcommand{\h}{\mathfrak h}
\newcommand{\huaB}{\mathcal{B}}
\newcommand{\huaL}{\mathcal{L}}
\newcommand{\huaR}{\mathcal{R}}
\newcommand{\huaW}{\mathcal{W}}
\newcommand{\huaX}{\mathcal{X}}
\newcommand{\huaY}{\mathcal{Y}}
\newcommand{\huaO}{{\mathcal{O}}}
\newcommand{\huaZ}{\mathcal{Z}}
\newcommand{\frkg}{\mathfrak g}
\newcommand{\frkL}{\mathfrak L}
\newcommand{\frkR}{\mathfrak R}
\newcommand{\pair}[1]{\left\langle #1\right\rangle}
\newcommand{\Courant}[1]{\left\llbracket  #1\right\rrbracket }
\newcommand{\Id}{{\rm{Id}}}
\newcommand{\br}[1]{   [ \cdot,    \cdot  ]   }
\newcommand{\gl}{\mathfrak {gl}}
\newcommand{\ad}{\mathrm{ad}}
\nc{\CV}{\mathbf{C}}
\begin{document}

\title{Relative Rota-Baxter operators and symplectic structures on Lie-Yamaguti algebras}
\author{Yunhe Sheng}
\address{Department of Mathematics, Jilin University, Changchun 130012, Jilin, China}
\email{shengyh@jlu.edu.cn}

\author{Jia Zhao}
\address{Department of Mathematics, Jilin University, Changchun 130012, Jilin, China}
\email{zhaojia18@mails.jlu.edu.cn}



\begin{abstract}
In this paper, first we introduce the notion of a quadratic Lie-Yamaguti algebra and show that the invariant bilinear form in a quadratic Lie-Yamaguti algebra induces an isomorphism between the adjoint representation and the coadjoint representation. Then we introduce the notions of  relative Rota-Baxter operators on   Lie-Yamaguti algebras and   pre-Lie-Yamaguti algebras. We prove that a pre-Lie-Yamaguti algebra gives rise to a Lie-Yamaguti algebra naturally and a relative Rota-Baxter operator induces a pre-Lie-Yamaguti algebra. Finally we study symplectic structures on Lie-Yamaguti algebra, which give rise to relative Rota-Baxter operators as well as pre-Lie-Yamaguti algebras. As applications, we study phase spaces of Lie-Yamaguti algebras, and show that there is a one-to-one correspondence between phase spaces of Lie-Yamaguti algebras and Manin triples of pre-Lie-Yamaguti algebras.

\end{abstract}


\keywords{Lie-Yamaguti algebra, pre-Lie-Yamaguti algebra, relative Rota-Baxter operator, symplectic structure}

\maketitle


\allowdisplaybreaks

 \section{Introduction}

The notion of a Lie-Yamaguti algebra was introduced by Kinyon and Weinstein in \cite{Weinstein} in their study of Courant algebroids. A Lie-Yamaguti algebra contains a binary multiplication and a trinary multiplication, and can be viewed as a generalization of a Lie algebra and a Lie-triple systems. This structure can be traced back to Nomizu's studies on the invariant affine connections on homogeneous spaces in 1950's (\cite{Nomizu}) and Yamaguti's studies on   general Lie triple systems and Lie triple algebras (\cite{Yamaguti1,Yamaguti2}). Lie-Yamaguti algebras were widely studied recently. In particular,  irreducible Lie-Yamaguti algebras and their relations to orthogonal Lie algebras were deeply studied in \cite{B.B.M,B.D.E,B.E.M1,B.E.M2}; Deformations and extensions of Lie-Yamaguti algebras were studied in \cite{L.CHEN,Zhang1}; Product structures and complex structures on Lie-Yamaguti algebras were studied in \cite{Sheng Zhao} using Nijenhuis operators on Lie-Yamaguti algebras. In \cite{Takahashi}, the author studied modules over quandles using representations of Lie-Yamaguti algebras.

The notion of Rota-Baxter operators on associative algebras was introduced  by G. Baxter \cite{Ba} in his study of
fluctuation theory in probability. 
A Rota-Baxter
operator of weight zero (also called an $\huaO$-operator \cite{Kupershmidt}) on a Lie algebra
was introduced independently   as the operator form of the classical Yang-Baxter equation, whereas the classical Yang-Baxter equation plays important roles in many fields in mathematics and mathematical physics such as quantum groups  and integrable systems \cite{CP,STS}. Rota-Baxter operators on super-type algebras were studied in \cite{AMM}, which build relationships between associative superalgebras, Lie superalgebras, L-dendriform superalgebras and pre-Lie superalgebras. The notion of a Rota-Baxter operator on a 3-Lie algebra was given in \cite{BaiRGuo}, and the cohomology theory of Rota-Baxter operators on  3-Lie algebras was developed in \cite{THS}.
Generally, Rota-Baxter operators can be defined on operads, which give rise to splittings of operads \cite{Bai-Bellier-Guo-Ni,PBG}. For further details on
Rota-Baxter operators, see ~\cite{Gub}.

Due to the importance of Lie-Yamaguti algebras and Rota-Baxter operators, we introduce the notion of a relative Rota-Baxter operator on a Lie-Yamaguti algebra. In the Lie algebra context, there is a close relationship between Rota-Baxter operators and pre-Lie algebras (\cite{An Bai,Bai}). This motivates us to define pre-Lie-Yamaguti algebras and study the relation with relative Rota-Baxter operators on   Lie-Yamaguti algebras. We show that a pre-Lie-Yamaguti algebra gives rise to  a  Lie-Yamaguti algebra together with a representation on itself, and a relative Rota-Baxter operator  on  a Lie-Yamaguti algebra induces a pre-Lie-Yamaguti algebra. We introduce the notion of a symplectic structure on a Lie-Yamaguti algebra, which gives rise to a relative Rota-Baxter operator with respect to the coadjoint representation, as well as a compatible pre-Lie-Yamaguti algebra. As a byproduct, we introduce the notion of a phase space of a Lie-Yamaguti algebra, and show that there is a one-to-one correspondence between phase spaces of Lie-Yamaguti algebras and Manin triples of  pre-Lie-Yamaguti algebras.

Note that a Lie-Yamaguti algebra reduces to a Lie triple system if the binary multiplication vanishes, it follows that the results established in this paper can be adapted to the context of Lie triple systems. See \cite{Jac,Deng,Lister,Yamaguti3} for more details of Lie triple systems.

This paper is organized as follows: In section \ref{sec:rep}, we recall some basic definitions of Lie-Yamaguti algebras and representations, and we give the dual representation, which is a tool to construct phase spaces. We introduce the notion of a quadratic Lie-Yamaguti algebra, in which the invariant bilinear form induces an isomorphism between the adjoint representation and the coadjoint representation. In section \ref{sec:pre}, we introduce the notion of relative Rota-Baxter operators on Lie-Yamaguti algebras and pre-Lie-Yamaguti algebras. We show that a pre-Lie-Yamaguti algebra gives rise to a Lie-Yamaguti algebra and a representation on itself. Moreover, a relative Rota-Baxter operator  on  a Lie-Yamaguti algebra gives rise to a pre-Lie-Yamaguti algebra. In section \ref{sec:sym}, we introduce the notion of a symplectic structure  on a Lie-Yamaguti algebra  and point out that it is a vital condition for a Lie-Yamaguti algebra owning a compatible pre-Lie-Yamaguti algebra structure. Then we introduce the notion of a phase space of a Lie-Yamaguti algebra, and prove that there is a one-to-one correspondence between   phase spaces of a Lie-Yamaguti algebra  and Manin triples of pre-Lie-Yamaguti algebras.

 In this paper, all the vector spaces are over $\mathbb{K}$, a field of characteristic $0$.

 \vspace{2mm}

\noindent
{\bf Acknowledgements. } This research was  supported by NSFC (11922110).

\section{Quadratic Lie-Yamaguti algebras}\label{sec:rep}
In this section, we first recall
 Lie-Yamaguti algebras and their representations. Then we introduce the notion of a quadratic Lie-Yamaguti algebra, which is a Lie-Yamaguti algebra equipped with a nondegenerate symmetric bilinear form satisfying some invariant conditions. We show that the invariant bilinear form induces  an isomorphism    from the adjoint representation to the coadjoint representation.

\begin{defi}\cite{Weinstein}\label{LY}
A {\bf Lie-Yamaguti algebra} is a vector space $\g$, together with a bilinear bracket $[\cdot,\cdot]:\wedge^2  \mathfrak{g} \to \mathfrak{g} $ and a trilinear bracket $\Courant{\cdot,\cdot,\cdot}:\wedge^2\g \otimes  \mathfrak{g} \to \mathfrak{g} $ such that the following equations are satisfied for all $x,y,z,w,t \in \g$,
\begin{eqnarray}
~ &&\label{LY1}[[x,y],z]+[[y,z],x]+[[z,x],y]+\Courant{x,y,z}+\Courant{y,z,x}+\Courant{z,x,y}=0,\\
~ &&\Courant{[x,y],z,w}+\Courant{[y,z],x,w}+\Courant{[z,x],y,w}=0,\\
~ &&\Courant{x,y,[z,w]}=[\Courant{x,y,z},w]+[z,\Courant{x,y,w}],\\
~ &&\Courant{x,y,\Courant{z,w,t}}=\Courant{\Courant{x,y,z},w,t}+\Courant{z,\Courant{x,y,w},t}+\Courant{z,w,\Courant{x,y,t}}.\label{fundamental}
\end{eqnarray}
\end{defi}

\begin{ex}
{\rm Let $(\frkg,[\cdot,\cdot])$ be a Lie algebra. We define $\Courant{\cdot,\cdot,\cdot
 }:\wedge^2\g\otimes \g\lon \g$ by  $$\Courant{x,y,z}:=[[x,y],z],\quad \forall x,y, z \in \mathfrak{g}.$$  Then $(\g,[\cdot,\cdot],\Courant{\cdot,\cdot,\cdot})$ becomes a Lie-Yamaguti algebra naturally.}
\end{ex}

\begin{rmk}
Given a Lie-Yamaguti algebra $(\m,[\cdot,\cdot]_\m,\Courant{\cdot,\cdot,\cdot}_\m)$ and any two elements $x,y \in \m$, the linear map $D(x,y):\m \to \m,~z\mapsto D(x,y)z=\Courant{x,y,z}_\m$ is an (inner) derivation. Moreover, let $D(\m,\m)$ be the linear span of the inner derivations. Consider the vector space $\g(\m)=D(\m,\m)\oplus \m$, and endow it with the multiplication as follows: for all $x,y,z,t \in \m$
\begin{eqnarray*}
[D(x,y),D(z,t)]_{\g(\m)}&=&D(\Courant{x,y,z}_\m,t)+D(z,\Courant{x,y,t}_\m),\\
~[D(x,y),z]_{\g(\m)}&=&D(x,y)z=\Courant{x,y,z}_\m,\\
~[z,t]_{\g(\m)}&=&D(z,t)+[z,t]_\m.
\end{eqnarray*}
Then $(\g(\m),[\cdot,\cdot]_\m)$ becomes a Lie algebra.
\end{rmk}

\begin{ex}
{\rm Let $(\frkg,[\cdot,\cdot])$ be a Lie algebra. We define $\Courant{\cdot,\cdot,\cdot
 }:\wedge^2\g\otimes \g\lon \g$ by  $$\Courant{x,y,z}:=[[x,y],z],\quad \forall x,y, z \in \mathfrak{g}.$$  Then $(\g,[\cdot,\cdot],\Courant{\cdot,\cdot,\cdot})$ becomes a Lie-Yamaguti algebra naturally. See \cite{B.E.M1} for more examples of Lie-Yamaguti algebras.}
\end{ex}

\begin{defi} 
Let $(\g,[\cdot,\cdot]_{\g},\Courant{\cdot,\cdot,\cdot}_{\g})$ and $(\h,[\cdot,\cdot]_{\h},\Courant{\cdot,\cdot,\cdot}_{\h})$ be two Lie-Yamaguti algebras. A {\bf homomorphism} from $(\g,[\cdot,\cdot]_{\g},\Courant{\cdot,\cdot,\cdot}_{\g})$ to $(\h,[\cdot,\cdot]_{\h},\Courant{\cdot,\cdot,\cdot}_{\h})$ is a linear map $\phi:\g \to \h$ such that for all $x,y,z \in \g$,
\begin{eqnarray*}
\phi([x,y]_{\g})&=&[\phi(x),\phi(y)]_{\h},\\
~ \phi(\Courant{x,y,z}_{\g})&=&\Courant{\phi(x),\phi(y),\phi(z)}_{\h}.
\end{eqnarray*}
\end{defi}

\begin{defi}\cite{Yamaguti2}
Let $(\g,[\cdot,\cdot],\Courant{\cdot,\cdot,\cdot})$ be a Lie-Yamaguti algebra and $V$ a vector space. A {\bf representation of $\g$ on $V$} consists of a linear map $\rho:\g \to \gl(V)$ and a bilinear map $\mu:\otimes^2 \g \to \gl(V)$ such that for all $x,y,z,w \in \g$,
\begin{eqnarray}
~&&\label{RLYb}\mu([x,y],z)-\mu(x,z)\rho(y)+\mu(y,z)\rho(x)=0,\\
~&&\label{RLYd}\mu(x,[y,z])-\rho(y)\mu(x,z)+\rho(z)\mu(x,y)=0,\\
~&&\label{RLYe}\rho(\Courant{x,y,z})=[D_{\rho,\mu}(x,y),\rho(z)],\\
~&&\label{RYT4}\mu(z,w)\mu(x,y)-\mu(y,w)\mu(x,z)-\mu(x,\Courant{y,z,w})+D_{\rho,\mu}(y,z)\mu(x,w)=0,\\
~&&\label{RLY5}\mu(\Courant{x,y,z},w)+\mu(z,\Courant{x,y,w})=[D_{\rho,\mu}(x,y),\mu(z,w)],
\end{eqnarray}
where the bilinear map $D_{\rho,\mu}:\otimes^2\g \to \gl(V)$ is given by
\begin{eqnarray}
 D_{\rho,\mu}(x,y):=\mu(y,x)-\mu(x,y)+[\rho(x),\rho(y)]-\rho([x,y]), \quad \forall x,y \in \g.\label{rep}
 \end{eqnarray}
We denote a representation of $\g$ on $V$ by $(V;\rho,\mu)$.
\end{defi}

\begin{rmk}\label{rmk:rep}
Let $(\g,[\cdot,\cdot],\Courant{\cdot,\cdot,\cdot})$ be a Lie-Yamaguti algebra and $(V;\rho,\mu)$ its representation. If $\rho=0$ and the Lie-Yamaguti algebra $\g$ reduces to a Lie tripe system $(\g,\Courant{\cdot,\cdot,\cdot})$,  then $(V;\mu)$  is a representation of the Lie triple systems $(\g,\Courant{\cdot,\cdot,\cdot})$; If $\mu=0$, $D_{\rho,\mu}=0$ and the Lie-Yamaguti algebra $\g$ reduces to a Lie algebra $(\g,[\cdot,\cdot])$, then $(V;\rho)$ is a representation  of the Lie algebra $(\g,[\cdot,\cdot])$. So the above definition of a representation of a Lie-Yamaguti algebra is a natural generalization of representations of Lie algebras and Lie triple systems.
\end{rmk}

The following result was given in \cite{Yamaguti2}.
\begin{lem}
If $(V;\rho,\mu)$ is a representation of a Lie-Yamaguti algebra $(\g,[\cdot,\cdot],\Courant{\cdot,\cdot,\cdot})$. Then we have the following equalities:
\begin{eqnarray}
\label{RLYc}&&D_{\rho,\mu}([x,y],z)+D_{\rho,\mu}([y,z],x)+D_{\rho,\mu}([z,x],y)=0;\\
\label{RLY5a}&&D_{\rho,\mu}(\Courant{x,y,z},w)+D_{\rho,\mu}(z,\Courant{x,y,w})=[D_{\rho,\mu}(x,y),D_{\rho,\mu}(z,w)];\\
~ &&\mu(\Courant{x,y,z},w)=\mu(x,w)\mu(z,y)-\mu(y,w)\mu(z,x)-\mu(z,w)D_{\rho,\mu}(x,y).\label{RLY6}
\end{eqnarray}
\end{lem}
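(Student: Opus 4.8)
The plan is to verify each of the three identities by substituting the definition \eqref{rep} of $D_{\rho,\mu}$ and then reducing everything to the five representation axioms \eqref{RLYb}--\eqref{RLY5}, the defining axioms of a Lie-Yamaguti algebra, and the Jacobi identity in the Lie algebra $\gl(V)$. Two of the three identities become short once the axioms are relabeled correctly, so I would dispatch those first and reserve most of the effort for the cyclic identity \eqref{RLYc}.

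For \eqref{RLY5a}, I would expand both $D_{\rho,\mu}(\Courant{x,y,z},w)$ and $D_{\rho,\mu}(z,\Courant{x,y,w})$ via \eqref{rep} and sort the eight resulting terms into four groups. The pair $\mu(w,\Courant{x,y,z})+\mu(\Courant{x,y,w},z)$ collapses to $[D_{\rho,\mu}(x,y),\mu(w,z)]$ by \eqref{RLY5} with $z$ and $w$ interchanged, while the pair $-\mu(\Courant{x,y,z},w)-\mu(z,\Courant{x,y,w})$ collapses to $-[D_{\rho,\mu}(x,y),\mu(z,w)]$ directly by \eqref{RLY5}. For the commutator pair $[\rho(\Courant{x,y,z}),\rho(w)]+[\rho(z),\rho(\Courant{x,y,w})]$ I would apply \eqref{RLYe} twice and then the Jacobi identity to obtain $[D_{\rho,\mu}(x,y),[\rho(z),\rho(w)]]$. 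For the final pair $-\rho([\Courant{x,y,z},w])-\rho([z,\Courant{x,y,w}])$ I would use the Lie-Yamaguti axiom $\Courant{x,y,[z,w]}=[\Courant{x,y,z},w]+[z,\Courant{x,y,w}]$ followed by \eqref{RLYe} to get $-[D_{\rho,\mu}(x,y),\rho([z,w])]$. Matching these four outputs against the expansion of $[D_{\rho,\mu}(x,y),D_{\rho,\mu}(z,w)]$ via \eqref{rep} finishes the identity.

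For \eqref{RLY6}, the observation I would exploit is that \eqref{RYT4}, after the relabeling $(x,y,z,w)\mapsto(z,x,y,w)$, expresses $\mu(z,\Courant{x,y,w})$ as $\mu(y,w)\mu(z,x)-\mu(x,w)\mu(z,y)+D_{\rho,\mu}(x,y)\mu(z,w)$. Substituting this into \eqref{RLY5}, solved in the form $\mu(\Courant{x,y,z},w)=[D_{\rho,\mu}(x,y),\mu(z,w)]-\mu(z,\Courant{x,y,w})$, produces exactly the right-hand side of \eqref{RLY6} once the $D_{\rho,\mu}(x,y)\mu(z,w)$ terms cancel.

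The main obstacle is \eqref{RLYc}, where no single axiom applies and the argument is essentially bookkeeping. I would expand the cyclic sum $\sum_{\mathrm{cyc}}D_{\rho,\mu}([x,y],z)$ using \eqref{rep} into three pieces: the $\mu$-terms $\sum_{\mathrm{cyc}}(\mu(z,[x,y])-\mu([x,y],z))$, the commutators $\sum_{\mathrm{cyc}}[\rho([x,y]),\rho(z)]$, and the terms $-\sum_{\mathrm{cyc}}\rho([[x,y],z])$. For the last piece I would rewrite $\sum_{\mathrm{cyc}}[[x,y],z]$ as $-\sum_{\mathrm{cyc}}\Courant{x,y,z}$ using \eqref{LY1} and then apply \eqref{RLYe} to turn it into $\sum_{\mathrm{cyc}}[D_{\rho,\mu}(x,y),\rho(z)]$. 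Expanding these $D_{\rho,\mu}(x,y)$ again by \eqref{rep}, the $-\rho([x,y])$ contribution cancels the commutator piece exactly, the $[\rho(x),\rho(y)]$ contribution yields triple-$\rho$ commutators that vanish by the Jacobi identity in $\gl(V)$, and the $\mu$ contribution leaves a sum of commutators $[\mu(y,x)-\mu(x,y),\rho(z)]$ together with its cyclic rotations. Finally I would use \eqref{RLYb} and \eqref{RLYd} to rewrite the original $\mu$-terms as $\rho(\cdot)\mu(\cdot)$ and $\mu(\cdot)\rho(\cdot)$ expressions; after collecting, these become precisely the negatives of the commutators just produced, so the whole cyclic sum vanishes. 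The delicate point, where I expect to spend the most care, is keeping the order of composition straight so that each $[\rho(a),\mu(b,c)]$ generated on one side is matched with its negative on the other; introducing shorthand for the three antisymmetrized combinations $\mu(y,x)-\mu(x,y)$ and its rotations makes the pairwise cancellation transparent.
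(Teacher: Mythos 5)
Your proposal is correct, and all three computations check out: the relabeling $(x,y,z,w)\mapsto(z,x,y,w)$ of \eqref{RYT4} combined with \eqref{RLY5} does yield \eqref{RLY6} with the $D_{\rho,\mu}(x,y)\mu(z,w)$ terms cancelling; the four-group decomposition for \eqref{RLY5a} reassembles exactly into $[D_{\rho,\mu}(x,y),\mu(w,z)-\mu(z,w)+[\rho(z),\rho(w)]-\rho([z,w])]=[D_{\rho,\mu}(x,y),D_{\rho,\mu}(z,w)]$ (the Jacobi step being the Leibniz rule for $\ad_{D_{\rho,\mu}(x,y)}$ on $[\rho(z),\rho(w)]$); and the twelve-term cancellation for \eqref{RLYc} works out term by term. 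The comparison with the paper is a little lopsided, because the paper does not actually prove this lemma: it attributes \eqref{RLYc} to Yamaguti and \eqref{RLY5a} to Takahashi, and records only that \eqref{RLY6} follows from \eqref{RYT4} and \eqref{RLY5} -- which is precisely your derivation of that identity. So for \eqref{RLY6} you take the same route as the paper, while for \eqref{RLYc} and \eqref{RLY5a} you supply self-contained direct computations where the paper defers to the literature. What your version buys is a proof entirely internal to the axioms \eqref{RLYb}--\eqref{RLY5} and \eqref{LY1}, \eqref{fundamental}, at the cost of the bookkeeping you correctly identify as the delicate part of \eqref{RLYc}; the paper's citations avoid that bookkeeping but leave the reader to consult two external sources whose conventions must be matched to the present ones.
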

\emptycomment{\begin{proof}
\eqref{RLYc} is given in \cite{Yamaguti2}, \eqref{RLY5a} is given in \cite{Takahashi}, and \eqref{RLY6} is followed by \eqref{RYT4} and \eqref{RLY5}. This completes the proof.
\end{proof}}

\begin{ex}\label{ad}
{\rm Let $(\g,[\cdot,\cdot],\Courant{\cdot,\cdot,\cdot})$ be a Lie-Yamaguti algebra. We define $\ad:\g \to \gl(\g)$ and $\frkR :\otimes^2\g \to \gl(\g)$ by $x \mapsto \ad_x$ and $(x,y) \mapsto \mathfrak{R}(x,y)$ respectively, where $\ad_xz=[x,z]$ and $\mathfrak{R}(x,y)z=\Courant{z,x,y}$ for all $z \in \g$. Then $(\g;\ad,\mathfrak{R})$ is a representation of $\g$ on itself, called the {\bf adjoint representation}. In this case, $\frkL\triangleq D_{\ad,\frkR}$ is given by for all $x,y \in \g$,}
\begin{eqnarray*}
\frkL(x,y)=\mathfrak{R}(y,x)-\mathfrak{R}(x,y)-\ad_{[x,y]}+[\ad_x,\ad_y].
\end{eqnarray*}
{\rm By \eqref{LY1}, we have}
\begin{eqnarray}
\frkL(x,y)z=\Courant{x,y,z}, \quad \forall z\in\g.\label{lef}
\end{eqnarray}
\end{ex}

Representations of a Lie-Yamaguti algebra can be characterized by the semidirect product Lie-Yamaguti algebras.

\begin{pro}\cite{Zhang1}
Let $(\g,[\cdot,\cdot],\Courant{\cdot,\cdot,\cdot})$ be a Lie-Yamaguti algebra and $V$ a vector space. Let $\rho:\g \to \gl(V)$ and $\mu:\otimes^2 \g \to \gl(V)$ be linear maps. Then $(V;\rho,\mu)$ is a representation of $(\g,[\cdot,\cdot],\Courant{\cdot,\cdot,\cdot})$ if and only if there is a Lie-Yamaguti algebra structure $([\cdot,\cdot]_{\rho,\mu},\Courant{\cdot,\cdot,\cdot}_{\rho,\mu})$ on the direct sum $\g \oplus V$ which is defined by for all $x,y,z \in \g, ~u,v,w \in V$,
\begin{eqnarray}
\label{semi1}[x+u,y+v]_{\rho,\mu}&=&[x,y]+\rho(x)v-\rho(y)u,\\
\label{semi2}~\Courant{x+u,y+v,z+w}_{\rho,\mu}&=&\Courant{x,y,z}+D_{\rho,\mu}(x,y)w+\mu(y,z)u-\mu(x,z)v,
\end{eqnarray}
where $D_{\rho,\mu}$ is given by \eqref{rep}.
This Lie-Yamaguti algebra $(\g \oplus V,[\cdot,\cdot]_{\rho,\mu},\Courant{\cdot,\cdot,\cdot}_{\rho,\mu})$ is called the {\bf semidirect product Lie-Yamaguti algebra}, and denoted by $\g \ltimes_{\rho,\mu} V$.
\end{pro}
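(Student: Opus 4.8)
The plan is to verify directly that, under the multiplications \eqref{semi1} and \eqref{semi2}, the four Lie-Yamaguti axioms \eqref{LY1}--\eqref{fundamental} on $\g\oplus V$ are equivalent to the five representation conditions \eqref{RLYb}--\eqref{RLY5}. Since both multiplications are multilinear and the formulas are linear in each slot, it suffices to test each axiom on homogeneous arguments, i.e.\ with each input lying entirely in $\g$ or entirely in $V$. The projection of each axiom onto the $\g$-component reproduces exactly the corresponding Lie-Yamaguti axiom for $\g$, which holds by assumption; hence only the $V$-components carry new information, and checking these amounts to the ``if and only if''.

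First I would record the ``abelian'' behaviour of $V$: from \eqref{semi1}, $[u,v]_{\rho,\mu}=0$ for $u,v\in V$, and from \eqref{semi2} the trilinear bracket vanishes as soon as two or more of its arguments lie in $V$. A short bookkeeping argument then shows that every term of an axiom carrying at least two $V$-entries vanishes identically, so these cases give $0=0$. The substantive content therefore comes from arguments with exactly one entry in $V$, and I would organize the computation by which slot carries that entry.

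Carrying this out slot by slot, I expect the following correspondence. Putting the $V$-entry in one of the first three slots of the second axiom yields \eqref{RLYb}; putting it in the first slot of the third axiom yields \eqref{RLYd}, while placing it inside the inner bracket $[z,w]$ of the third axiom yields \eqref{RLYe}. For the fundamental identity \eqref{fundamental}, a $V$-entry in the outer pair $(x,y)$ produces \eqref{RYT4}, whereas a $V$-entry in the inner triple produces \eqref{RLY5}. A pleasant check is that the first axiom \eqref{LY1} contributes no new condition: its $V$-component collapses to an identity precisely because of the defining formula \eqref{rep} for $D_{\rho,\mu}$; moreover the remaining slot placements (the fourth slot of the second axiom and the last slot of the fundamental identity) reproduce the derived identities \eqref{RLYc} and \eqref{RLY5a}, consistent with the preceding Lemma.

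The main obstacle will be the sheer bookkeeping in the fundamental identity \eqref{fundamental}, where each placement of the $V$-entry generates several nested bracket terms that must be expanded using \eqref{semi2} and then matched against \eqref{RYT4} and \eqref{RLY5} after a relabelling of variables; keeping the signs and the order of composition of the operators $\rho$, $\mu$, $D_{\rho,\mu}$ straight is where care is required. Since each of these $V$-component computations is an equivalence, the calculation establishes both directions at once, and the equivalence asserted in the proposition follows immediately.
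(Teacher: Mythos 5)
Your proposal is correct: the reduction to homogeneous arguments, the observation that terms with two or more $V$-entries vanish, and the slot-by-slot matching of the $V$-components with \eqref{RLYb}--\eqref{RLY5} (with \eqref{LY1} collapsing via \eqref{rep} and the remaining placements giving the derived identities \eqref{RLYc} and \eqref{RLY5a}) all check out. The paper states this proposition with a citation to \cite{Zhang1} and gives no proof of its own, but your direct verification is the standard argument for such semidirect-product characterizations and establishes the stated equivalence.
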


\begin{defi}
Let $(\g,[\cdot,\cdot]_{\g},\Courant{\cdot,\cdot,\cdot}_{\g})$ be a Lie-Yamaguti algebra, and $(V;\rho,\mu)$ and $(W;\varrho,\nu)$ be two representations of $\g$. A {\bf homomorphism} from $(V;\rho,\mu)$ to $(W;\varrho,\nu)$ is a linear map $\psi:V \to W$ such that for all $x,y \in \g, v \in V$,
\begin{eqnarray*}
~\psi(\rho(x)v)&=&\varrho(x)\psi(v),\\
~\psi(\mu(x,y)v)&=&\nu(x,y)\psi(v).
\end{eqnarray*}
\end{defi}

\emptycomment{It is easy to see that for all $x,y \in \g,~ v \in V,$
\begin{eqnarray*}
\psi(D_{\rho,\mu}(x,y)v)=D_{\varrho,\nu}(x,y)\psi(v),
\end{eqnarray*}
where $D_{\varrho,\nu}$ is defined by \eqref{rep}.}

Let $(V;\rho,\mu)$ be a representation of a Lie-Yamaguti algebra $(\g,[\cdot,\cdot],\Courant{\cdot,\cdot,\cdot})$ and $V^*$ be the dual space of $V$. We define linear maps $\rho^*:\g \to \gl(V^*)$ and $\mu^*,~D_{\rho,\mu}^*:\otimes^2 \g \to \gl(V^*)$ by for all $x,y \in \g, ~\alpha \in V^*, ~v \in V,$
\begin{eqnarray}
\label{pair1}\pair{\rho^*(x)\alpha, v}&=&-\pair{\alpha,\rho(x)v},\\
\label{pair2}\pair{\mu^*(x,y)\alpha,v}&=&-\pair{\alpha,\mu(x,y)v},\\
\label{pair3}\langle D_{\rho,\mu}^*(x,y)\alpha,v\rangle&=&-\langle\alpha,D_{\rho,\mu}(x,y)v\rangle.
\end{eqnarray}

 Define the switching operator $\tau: \otimes^2\g \to \otimes^2\g$ by
\begin{eqnarray*}
\tau(x\otimes y)=y\otimes x, \quad \forall x\otimes y \in \otimes^2\g.
\end{eqnarray*}

If $\rho^*$ and $-\mu^*\tau$ are given, then $D_{\rho^*,-\mu^*\tau}$ will be given precisely by
\begin{eqnarray}
D_{\rho^*,-\mu^*\tau}(x,y)=\mu^*(y,x)-\mu^*(x,y)+[\rho^*(x),\rho^*(y)]-\rho^*([x,y]).
\end{eqnarray}

\begin{lem}\label{dual rep0}
With the above notations, we have
\begin{eqnarray}
D_{\rho^*,-\mu^*\tau}=D^*_{\rho,\mu}.
\end{eqnarray}
\end{lem}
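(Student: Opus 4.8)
The plan is to verify this operator identity by pairing both sides against an arbitrary covector $\alpha\in V^*$ and vector $v\in V$, thereby reducing everything to the defining relations \eqref{pair1}--\eqref{pair3} together with the explicit formula \eqref{rep} for $D_{\rho,\mu}$. Since both $D_{\rho^*,-\mu^*\tau}(x,y)$ and $D^*_{\rho,\mu}(x,y)$ are endomorphisms of $V^*$, it suffices to show that $\langle D_{\rho^*,-\mu^*\tau}(x,y)\alpha,v\rangle=\langle D^*_{\rho,\mu}(x,y)\alpha,v\rangle$ for all $x,y\in\g$, $\alpha\in V^*$, $v\in V$.

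First I would expand the left-hand side using the stated formula $D_{\rho^*,-\mu^*\tau}(x,y)=\mu^*(y,x)-\mu^*(x,y)+[\rho^*(x),\rho^*(y)]-\rho^*([x,y])$ and evaluate it term by term. The two $\mu^*$ terms follow immediately from \eqref{pair2}, contributing $-\langle\alpha,\mu(y,x)v\rangle+\langle\alpha,\mu(x,y)v\rangle$, and the last term follows immediately from \eqref{pair1}, contributing $+\langle\alpha,\rho([x,y])v\rangle$.

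The crux — and the single place where signs must be tracked carefully — is the bracket term $[\rho^*(x),\rho^*(y)]$. Applying \eqref{pair1} twice gives $\langle\rho^*(x)\rho^*(y)\alpha,v\rangle=\langle\alpha,\rho(y)\rho(x)v\rangle$, the two minus signs cancelling, and symmetrically $\langle\rho^*(y)\rho^*(x)\alpha,v\rangle=\langle\alpha,\rho(x)\rho(y)v\rangle$; subtracting yields $\langle[\rho^*(x),\rho^*(y)]\alpha,v\rangle=-\langle\alpha,[\rho(x),\rho(y)]v\rangle$. Thus each of the four starred summands contributes exactly $-\langle\alpha,(\cdot)v\rangle$ of the matching unstarred summand appearing in \eqref{rep}.

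Collecting the four contributions produces $-\langle\alpha,D_{\rho,\mu}(x,y)v\rangle$, which by \eqref{pair3} is precisely $\langle D^*_{\rho,\mu}(x,y)\alpha,v\rangle$; since $v$ is arbitrary, the two operators agree. I expect no genuine obstacle beyond this sign bookkeeping: the essential phenomenon is that the order reversal inherent in dualizing a composition is neutralized inside the commutator, so that $D_{\rho,\mu}$ dualizes termwise into the given formula. This is exactly what makes $(V^*;\rho^*,-\mu^*\tau)$ the natural candidate for the dual representation.
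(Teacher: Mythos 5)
Your proof is correct and takes essentially the same route as the paper: pair against arbitrary $\alpha\in V^*$ and $v\in V$, expand $D$ term by term, and apply the duality relations \eqref{pair1}--\eqref{pair3}. Your explicit handling of the commutator term (the order reversal from dualizing a composition being absorbed by the antisymmetry of the bracket) is in fact slightly more careful than the paper's one-line dualization, but the underlying argument is identical.
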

\begin{proof}
Indeed, for all $x,y \in \g,~v \in V,~\alpha\in V^*$, we have
\begin{eqnarray*}
~ &&\pair{D_{\rho,\mu}(x,y)v,\alpha}\\
~ &=&\pair{\Big(\mu(y,x)-\mu(x,y)+\rho(x)\rho(y)-\rho(y)\rho(x)-\rho([x,y])\Big)v,\alpha}\\
~ &=&-\pair{v,\Big(\mu^*(y,x)-\mu^*(x,y)+\rho^*(x)\rho^*(y)-\rho^*(y)\rho^*(x)-\rho^*([x,y])\Big)\alpha}\\
~ &=&-\pair{v,D_{\rho^*,-\mu^*\tau}(x,y)\alpha},
\end{eqnarray*}
which implies the conclusion.
\end{proof}

 Now  we study dual representations of a Lie-Yamaguti algebra.
\begin{pro}\label{dual}
Let $(V;\rho,\mu)$ be a representation of a Lie-Yamaguti algebra $(\g,[\cdot,\cdot],\Courant{\cdot,\cdot,\cdot})$. Then
\begin{eqnarray*}
\big(V^*;\rho^*,-\mu^*\tau\big)
\end{eqnarray*}
is a representation of   $\g$ on $V^*$, which is called the {\bf dual representation} of $(V;\rho,\mu)$.
\end{pro}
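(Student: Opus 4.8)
The plan is to verify directly that the pair $\big(\rho^*,-\mu^*\tau\big)$ satisfies the five defining axioms \eqref{RLYb}--\eqref{RLY5} of a representation, by dualizing the corresponding identities already known to hold for $(V;\rho,\mu)$. The key organizing principle is that taking adjoints (dualizing) reverses the order of composition: for operators $A,B$ on $V$ one has $(AB)^* = B^*A^*$, and consequently a commutator $[A,B]$ dualizes to $[B^*,A^*] = -[A^*,B^*]^* $ read appropriately. So I would pair each axiom for the dual representation against the original operator, evaluate on $\alpha \in V^*$ and $v \in V$ via the pairing $\pair{\cdot,\cdot}$, and move everything across to the $V$-side using the defining relations \eqref{pair1}, \eqref{pair2}, \eqref{pair3}.

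First I would record the crucial fact, already supplied by Lemma \ref{dual rep0}, that $D_{\rho^*,-\mu^*\tau} = D^*_{\rho,\mu}$; this is what makes the whole computation close, since the operator $D_{\rho,\mu}$ enters axioms \eqref{RLYe}, \eqref{RYT4}, \eqref{RLY5}, and without identifying its dual-side analogue one could not match terms. Next I would treat the two ``low'' axioms. For \eqref{RLYb}, I would test $\mu^*(y,x)\rho^*(z) - \mu^*(z,x)\rho^*(y) + \mu^*([y,z],x)$ (the version with entries reordered by $\tau$) against a pairing, transport it to $V$ using \eqref{pair1} and \eqref{pair2}, and observe that the resulting expression on $V$ is precisely the identity \eqref{RLY6} (or a reordering of \eqref{RLYd}), which already holds. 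Here the bookkeeping is that the $\tau$ swaps the first two slots of $\mu$, so the dual-side axiom \eqref{RLYb} corresponds to an original-side identity with the roles of the arguments permuted; matching these permutations correctly is the one place demanding care. The axiom \eqref{RLYd} for the dual is handled symmetrically, landing on \eqref{RLYb} for the original.

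For the three ``higher'' axioms I would proceed analogously. Axiom \eqref{RLYe}, namely $\rho^*(\Courant{x,y,z}) = [D^*_{\rho,\mu}(x,y),\rho^*(z)]$, follows upon dualizing \eqref{RLYe} for $(V;\rho,\mu)$, using that $[\rho^*(z),D^*_{\rho,\mu}(x,y)]$ is the adjoint of $[D_{\rho,\mu}(x,y),\rho(z)]$ with a sign from order reversal, together with Lemma \ref{dual rep0}. Axiom \eqref{RLY5} is the same kind of commutator identity and dualizes the same way. The genuinely more involved one is \eqref{RYT4}, which is quadratic in $\mu$: its dual-side version involves products like $\mu^*(x,y)\mu^*(z,w)$, and after transporting to $V$ the two $\mu$'s reverse order and each has its first two arguments swapped by $\tau$. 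I expect the main obstacle to be exactly this term: showing that the reordered, swapped quadratic expression coincides with a known identity — it should match \eqref{RYT4} itself after a relabeling of variables, but confirming the variable permutation and the placement of $D_{\rho,\mu}$ (via Lemma \ref{dual rep0}) is the delicate step. Once all five axioms are checked, the pair $\big(V^*;\rho^*,-\mu^*\tau\big)$ is a representation and the proposition follows.
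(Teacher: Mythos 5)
Your strategy coincides with the paper's: evaluate each axiom for $(\rho^*,-\mu^*\tau)$ against the pairing, transport everything to the $V$-side via \eqref{pair1}--\eqref{pair3}, and use Lemma \ref{dual rep0} to identify $D_{\rho^*,-\mu^*\tau}$ with $D^*_{\rho,\mu}$; the outline is sound. One correction, precisely at the step you flag as delicate: the dual of the quadratic axiom \eqref{RYT4} does \emph{not} close against \eqref{RYT4} after a relabeling. Because dualization reverses the order of composition, the transported expression has $D_{\rho,\mu}$ sitting to the \emph{right} of $\mu$ and the ternary bracket in the \emph{first} slot of $\mu$ (the paper obtains $\mu(y,x)\mu(w,z)-\mu(z,x)\mu(w,y)-\mu(w,x)D_{\rho,\mu}(y,z)-\mu(\Courant{y,z,w},x)$), which is a relabeling of the derived identity \eqref{RLY6}, $\mu(\Courant{x,y,z},w)=\mu(x,w)\mu(z,y)-\mu(y,w)\mu(z,x)-\mu(z,w)D_{\rho,\mu}(x,y)$ --- an identity supplied by the lemma preceding the proposition and genuinely different from \eqref{RYT4}. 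Relatedly, \eqref{RLY6} plays no role for the low axioms: under dualization \eqref{RLYb} and \eqref{RLYd} simply exchange with each other (with permuted arguments), so your parenthetical ``a reordering of \eqref{RLYd}'' is the right reading there, not \eqref{RLY6}. With these two identifications fixed, the verification closes exactly as in the paper, whose proof invokes \eqref{RLYb}, \eqref{RLYd}, \eqref{RLYe}, \eqref{RLY5} and \eqref{RLY6}.
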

\begin{proof}
By \eqref{pair1} and \eqref{pair2}, we have
\begin{eqnarray}
~ \label{Dual rep1}&&\langle\big(-\mu^*\tau([x,y],z)-(-\mu^*\tau)(x,z)\rho^*(y)+(-\mu^*\tau)(y,z)\rho^*(x)\big)\alpha,v\rangle\\
~ \nonumber&=&\langle\alpha,\big(\mu(z,[x,y])+\rho(y)\mu(z,x)-\rho(x)\mu(z,y)\big)v\rangle,
\end{eqnarray}
and
\begin{eqnarray}
~ &&\langle\big((-\mu^*\tau)(x,[y,z])-\rho^*(y)(-\mu^*\tau)(x,z)+\rho^*(z)(-\mu^*\tau)(x,y)\big)\alpha,v\rangle\\
~ \nonumber&=&\langle\alpha,\big(\mu([y,z],x)+\mu(z,x)\rho(y)-\mu(y,x)\rho(z)\big)v\rangle.
\end{eqnarray}
Besides, by Lemma \ref{dual rep0}, we have
\begin{eqnarray}
~ &&\langle\big(-\mu^*\tau(\Courant{x,y,z},w)+(-\mu^*\tau)(z,\Courant{x,y,w})-[D_{\rho^*,-\mu^*\tau}(x,y),(-\mu^*\tau)(z,w)]\big)\alpha,v\rangle\\
~ \nonumber&=&\langle\alpha,\big(\mu(w,\Courant{x,y,z})+\mu(\Courant{x,y,w},z)-[D_{\rho,\mu}(x,y),\mu(w,z)]\big)v\rangle,
\end{eqnarray}
\begin{eqnarray}
~ &&\langle\big((-\mu^*\tau)(z,w)(-\mu^*\tau)(x,y)-(-\mu^*\tau)(y,w)(-\mu^*\tau)(x,z)+D_{\rho^*,-\mu^*\tau}(y,z)(-\mu^*\tau)(x,w)\\
~ \nonumber&&-(-\mu^*\tau)(x,\Courant{y,z,w})\big)\alpha,v\rangle\\
~ \nonumber&=&\langle\alpha,\big(\mu(y,x)\mu(w,z)-\mu(z,x)\mu(w,y)-\mu(w,x)D_{\rho,\mu}(y,z)-\mu(\Courant{y,z,w},x)\big)v\rangle,
\end{eqnarray}
and
\begin{eqnarray}
~ \label{Dual rep7}&&\langle\big([D_{\rho^*,-\mu^*\tau}(x,y),\rho^*(z)]-\rho^*(\Courant{x,y,z})\big)\alpha,v\rangle=
-\langle\alpha,\big([D_{\rho,\mu}(x,y),\rho(z)]-\rho(\Courant{x,y,z})\big)v\rangle.
\end{eqnarray}
Since $(V;\rho,\mu )$ is a representation, from \eqref{RLYb}, \eqref{RLYd}, \eqref{RLYe}, \eqref{RLY5} and \eqref{RLY6}, we deduce that the expressions \eqref{Dual rep1}-\eqref{Dual rep7} all vanish, which implies that $(V^*;\rho^*,-\mu^*\tau)$ is a representation.
\end{proof}

\begin{rmk}
Dual representation of Lie triple systems was studied in \cite{Deng}. Combining with Remark \ref{rmk:rep}, one can see that the above definition of dual representations of a Lie-Yamaguti algebra is a natural generalization of dual representations of Lie triple systems.
\end{rmk}

\emptycomment{
\begin{rmk}
Moreover, by \eqref{RLYc} and \eqref{RLY5a}, it is obvious that
\begin{eqnarray*}
D_{\rho^*,-\mu^*\tau}([x,y],z)+c.p.=0,
\end{eqnarray*}
and
\begin{eqnarray*}
D_{\rho^*,-\mu^*\tau}(\Courant{x,y,z},w)+D_{\rho^*,-\mu^*\tau}(z,\Courant{x,y,w})=[D_{\rho^*,-\mu^*\tau}(x,y),D_{\rho^*,-\mu^*\tau}(z,w)].
\end{eqnarray*}
\end{rmk}}

\begin{ex}
{\rm Let $(\g,[\cdot,\cdot],\Courant{\cdot,\cdot,\cdot})$ be a Lie-Yamaguti algebra and $(\g;\ad,\mathfrak{R})$ its adjoint representation, where $\ad, \mathfrak{R}$ are given in Example \ref{ad}. Then $(\g^*;\ad^*,-\mathfrak{R}^*{\tau})$ is the dual representation of the adjoint representation, called the {\bf coadjoint representation}. Note that $\mathfrak{L}^*$ is the dual of $\mathfrak{L}$.}
\end{ex}

At the end of this section, we introduce the notion of a quadratic Lie-Yamaguti algebra and show that it induces an isomorphism from the adjoint representation to the coadjoint representation.

\begin{defi}
A {\bf quadratic Lie-Yamaguti algebra} is a Lie-Yamaguti algebra $(\g,[\cdot,\cdot],\Courant{\cdot,\cdot,\cdot})$ equipped with a nondegenerate symmetric bilinear form $\huaB \in \otimes^2\g^*$ satisfying the following invariant condition
\begin{eqnarray}
\label{invr1}\huaB([x,y],z)&=&-\huaB(y,[x,z]),\\
\label{invr2}\huaB(\Courant{x,y,z},w)&=&\huaB(x,\Courant{w,z,y}), \quad \forall x,y,z \in \g.
\end{eqnarray}
\end{defi}
Note that
\begin{eqnarray}
\label{invr3}\huaB(\Courant{x,y,z},w)=-\huaB(z,\Courant{x,y,w}).
\end{eqnarray}
Indeed, we have
\begin{eqnarray*}
\huaB(x,\Courant{w,z,y})=-\huaB(x,\Courant{z,w,y})=-\huaB(\Courant{z,w,y},x)=-\huaB(z,\Courant{x,y,w}).
\end{eqnarray*}
The nondegenerate bilinear form $\huaB$ induces an isomorphism
$\huaB^\sharp:\g \to \g^*$
defined by
\begin{eqnarray}
\langle\huaB^\sharp(x),y\rangle=\huaB(x,y), \quad \forall x,y \in \g.\label{invariant}
\end{eqnarray}

\begin{thm}
With the above notations, $\huaB^\sharp$ is an isomorphism from the adjoint representation $(\g;\ad,\mathfrak{R})$ to the coadjoint representation $(\g^*;\ad^*,-\mathfrak{R}^*{\tau})$.
\end{thm}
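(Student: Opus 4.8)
The plan is to reduce the statement to the verification of two intertwining identities. Since $\huaB$ is nondegenerate, $\huaB^\sharp$ is already a linear isomorphism $\g\to\g^*$; by the definition of a homomorphism of representations recalled above, it therefore suffices to check that $\huaB^\sharp$ carries the adjoint action into the coadjoint action, i.e. that
\begin{align*}
\huaB^\sharp(\ad_x v)&=\ad^*(x)\,\huaB^\sharp(v),\\
\huaB^\sharp(\frkR(x,y)v)&=(-\frkR^*\tau)(x,y)\,\huaB^\sharp(v),
\end{align*}
hold for all $x,y,v\in\g$. Once both are established, $\huaB^\sharp$ is a bijective homomorphism, hence an isomorphism of representations (its inverse then automatically intertwines the two structures as well).

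Both identities are equalities in $\g^*$, so I would evaluate each against an arbitrary $w\in\g$ and translate everything back to $\huaB$ using the defining relation $\langle\huaB^\sharp(x),y\rangle=\huaB(x,y)$ together with the defining relations \eqref{pair1} and \eqref{pair2} of the dual maps. For the first identity, pairing the left-hand side with $w$ gives $\huaB([x,v],w)$, while pairing the right-hand side with $w$ and using $\langle\ad^*(x)\alpha,w\rangle=-\langle\alpha,[x,w]\rangle$ gives $-\huaB(v,[x,w])$; these coincide by the invariance condition \eqref{invr1}. For the second identity, I first unwind $(-\frkR^*\tau)(x,y)=-\frkR^*(y,x)$ and $\frkR(x,y)z=\Courant{z,x,y}$; then the left-hand side paired with $w$ becomes $\huaB(\Courant{v,x,y},w)$ and the right-hand side becomes $\huaB(v,\Courant{w,y,x})$, and these agree by the invariance condition \eqref{invr2} upon the substitution $(x,y,z)\mapsto(v,x,y)$.

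The argument is essentially bookkeeping and presents no genuine obstacle; the only point demanding care is the switching operator $\tau$ built into the coadjoint map $-\frkR^*\tau$. This $\tau$ reverses the last two entries of the triple bracket, and one must match this reversal precisely against the (already reversed) argument order $\Courant{w,z,y}$ appearing on the right of \eqref{invr2}. It is worth noting that the symmetric form of the invariance condition, \eqref{invr3}, is not needed for this computation: condition \eqref{invr2} supplies exactly the pattern required by the coadjoint formula.
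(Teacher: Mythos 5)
Your proposal is correct and proceeds exactly as the paper does: it verifies the two intertwining identities $\huaB^\sharp(\ad_xy)=\ad_x^*\huaB^\sharp(y)$ and $\huaB^\sharp(\frkR(x,y)z)=-(\frkR(y,x))^*\huaB^\sharp(z)$ by pairing against an arbitrary element and invoking \eqref{invr1} and \eqref{invr2}, with the bookkeeping for $\tau$ handled the same way. Your observation that \eqref{invr3} is not needed is also consistent with the paper's argument.
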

\begin{proof}
For all $x,y,z\in \g$, by \eqref{invr1} and \eqref{invariant}, we have
\begin{eqnarray*}
\langle\huaB^\sharp(\ad_xy),z\rangle=\langle\huaB^\sharp([x,y]),z\rangle=\huaB([x,y],z)=-\huaB(y,[x,z])=-\langle\huaB^\sharp(y),[x,z]\rangle
=\langle\ad_x^*\huaB^\sharp(y),z\rangle,
\end{eqnarray*}
which implies that
\begin{eqnarray}
\huaB^\sharp(\ad_xy)=\ad_x^*\huaB^\sharp(y).\label{iso1}
\end{eqnarray}
Furthermore, for all $x,y,z,w \in \g$, by \eqref{invr2}, we have
\begin{eqnarray*}
\langle\huaB^\sharp(\mathfrak{R}(x,y)z),w\rangle=\huaB(\Courant{z,x,y},w)=\huaB(z,\Courant{w,y,x})=\langle\huaB^\sharp(z),\Courant{w,y,x}\rangle
=-\langle(\mathfrak{R}(y,x))^*\huaB^\sharp(z),w\rangle,
\end{eqnarray*}
which implies that
\begin{eqnarray}
\huaB^\sharp(\mathfrak{R}(x,y)z)&=&-(\mathfrak{R}(y,x))^*\huaB^\sharp(z).\label{iso3}
\end{eqnarray}
From \eqref{iso1}-\eqref{iso3}, we deduce that $\huaB^\sharp$ is an isomorphism between the adjoint representation and the coadjoint representation.
\end{proof}

\emptycomment{Note that
\begin{eqnarray*}
\huaB^\sharp(\mathfrak{L}(x,y)z)=(\mathfrak{L}(x,y))^*\huaB^\sharp(z).
\end{eqnarray*}
In fact, we have
\begin{eqnarray*}
\langle\huaB^\sharp(\mathfrak{L}(x,y)z),w\rangle=\huaB(\Courant{x,y,z},w)=-\huaB(z,\Courant{x,y,w})=-\langle\huaB^\sharp(z),\Courant{x,y,w}\rangle
=\langle(\mathfrak{L}(x,y))^*\huaB^\sharp(z),w\rangle.
\end{eqnarray*}}

\section{Pre-Lie-Yamaguti algebras and relative Rota-Baxter operators}\label{sec:pre}
In this section, we introduce the notion of relative Rota-Baxter operators on Lie-Yamaguti algebras and pre-Lie-Yamaguti algebras. The relationship between  relative Rota-Baxter operators and pre-Lie-Yamaguti algebras is investigated. We prove that a pre-Lie-Yamaguti algebra gives rise to a Lie-Yamaguti algebra and a representation on itself. Moreover, a relative Rota-Baxter operator  on  a Lie-Yamaguti algebra gives rise to a pre-Lie-Yamaguti algebra. Thus,   pre-Lie-Yamaguti algebras are the underlying structures of relative Rota-Baxter operators on Lie-Yamaguti algebras.

\emptycomment{In version of Lie or $3$-Lie algebras, close relationship between relative Rota-Baxter operators and pre-type algebras were studied deeply. Bai has written a series of works about pre-Lie algebras, pre-Lie algebroids and even $3$-pre-Lie algebras in \cite{An Bai,BCM3,BCM1,BCM2,B.G.S,Chen Hou Bai,Liu Bai Sheng,Liu Bai Sheng2,Liu Sheng Bai3,Zhang Bai}. Moreover, pre-$F$-manifold algebra and its relative Rota-Baxter operators have been investigated in \cite{Liu Bai Sheng}. This is a motivation for us to study these objects in the version of Lie-Yamaguti algebras and we claim that similar conclusions will be obtained. This point of view will be reinforced in the following discussions. }

\begin{defi}
Let $(\g,[\cdot,\cdot],\Courant{\cdot,\cdot,\cdot})$ be a Lie-Yamaguti algebra with a representation $(V;\rho,\mu)$. A linear map $T:V\to \g$ is called a {\bf relative Rota-Baxter operator} on $\g$ with respect to $(V;\rho,\mu)$ if $T$ satisfies
\begin{eqnarray}
~[Tu,Tv]&=&T\Big(\rho(Tu)v-\rho(Tv)u\Big),\\
~\label{Oopera}\Courant{Tu,Tv,Tw}&=&T\Big(D_{\rho,\mu}(Tu,Tv)w+\mu(Tv,Tw)u-\mu(Tu,Tw)v\Big), \quad \forall u,v,w \in V.
\end{eqnarray}
\end{defi}
\begin{rmk}
If a Lie-Yamaguti algebra $(\g,[\cdot,\cdot],\Courant{\cdot,\cdot,\cdot})$ with a representation $(V;\rho,\mu)$ reduces to a Lie triple system $(\g,\Courant{\cdot,\cdot,\cdot})$ with a  representation   $(V;\mu)$, we   obtain the notion of  a {\bf relative Rota-Baxter operator on a Lie triple system}, i.e. the following equation holds:
\begin{eqnarray*}
\Courant{Tu,Tv,Tw}=T\Big(D_\mu(Tu,Tv)w+\mu(Tv,Tw)u-\mu(Tu,Tw)v\Big), \quad \forall u,v,w \in V,
\end{eqnarray*}
where $D_\mu(x,y):=\mu(y,x)-\mu(x,y)$ for any $x,y \in \g.$ Thus, all the results given in the sequel can be adapted to the Lie triple system context.
\end{rmk}

\begin{defi}
A {\bf Rota-Baxter operator} on a Lie-Yamaguti algebra $(\g,[\cdot,\cdot],\Courant{\cdot,\cdot,\cdot})$ is a relative Rota-Baxter operator on $\g$ with respect to the adjoint representation $(\g,\ad,\mathfrak{R})$, i.e. a linear map $T:\g \to \g$ satisfying
\begin{eqnarray*}
[Tx,Ty]&=&T\Big([Tx,y]+[x,Ty]\Big),\\
~ \Courant{Tx,Ty,Tz}&=&T\Big(\Courant{Tx,Ty,z}+\Courant{x,Ty,Tz}-\Courant{y,Tx,Tz}\Big), \quad \forall x,y,z \in \g.
\end{eqnarray*}
\end{defi}

\begin{ex}\label{ex1}
{\rm Let $\g$ be a 2-dimensional Lie-Yamaguti algebra $\g$ with a basis $\{e_1,e_2\}$ defined by}
$$[e_1,e_2]=e_1,\quad \Courant{e_1,e_2,e_2}=e_1.$$
{\rm Then the operator}
$
T=\begin{pmatrix}
 0 & a \\
 0 & b
 \end{pmatrix}
$
{\rm is a Rota-Baxter operator on $\g$.}
\end{ex}

\begin{ex}\label{ex2}
{\rm Let $\g$ be a 4-dimensional Lie-Yamaguti algebra $\g$ with a basis $\{e_1,e_2,e_3,e_4\}$ defined by}
$$[e_1,e_2]=2e_4,\quad \Courant{e_1,e_2,e_1}=e_4.$$
{\rm Then}
\begin{gather*}
T=\begin{pmatrix}
 0 & a &0 & 0\\
 0 & 0 & 0& 0\\
 b & c & d & e\\
 f & g & h & k
 \end{pmatrix}
\end{gather*}
{\rm  is a Rota-Baxter operator on $\g$.}
\end{ex}

\begin{defi}
A {\bf pre-Lie-Yamaguti algebra} is a vector space $A$ with a bilinear operation $*:\otimes^2A \to A$ and a trilinear operation $\{\cdot,\cdot,\cdot\} :\otimes^3A \to A$ such that for all $x,y,z,w,t \in A$
\begin{eqnarray}
~ &&\label{pre2}\{z,[x,y]_C,w\}-\{y*z,x,w\}+\{x*z,y,w\}=0,\\
~ &&\label{pre4}\{x,y,[z,w]_C\}=z*\{x,y,w\}-w*\{x,y,z\},\\
~ &&\label{pre5}\{\{x,y,z\},w,t\}-\{\{x,y,w\},z,t\}-\{x,y,\{z,w,t\}_D\}-\{x,y,\{z,w,t\}\}\\
~ &&\nonumber+\{x,y,\{w,z,t\}\}+\{z,w,\{x,y,t\}\}_D=0,\\
~ &&\label{pre6}\{z,\{x,y,w\}_D,t\}+\{z,\{x,y,w\},t\}-\{z,\{y,x,w\},t\}+\{z,w,\{x,y,t\}_D\}\\
~ &&\nonumber+\{z,w,\{x,y,t\}\}-\{z,w,\{y,x,t\}\}=\{x,y,\{z,w,t\}\}_D-\{\{x,y,z\}_D,w,t\},\\
~&&\label{pre7}\{x,y,z\}_D*w+\{x,y,z\}*w-\{y,x,z\}*w=\{x,y,z*w\}_D-z*\{x,y,w\}_D,
\end{eqnarray}
where the commutator
$[\cdot,\cdot]_C:\wedge^2\g \to \g$ and $\{\cdot,\cdot,\cdot\}_D: \otimes^3 A \to A$ are defined by for all $x,y,z \in A$,
\begin{eqnarray}
~[x,y]_C:=x*y-y*x, \quad \forall x,y \in A,\label{pre10}
\end{eqnarray}
and
\begin{eqnarray}
\{x,y,z\}_D:=\{z,y,x\}-\{z,x,y\}+(y,x,z)-(x,y,z), \label{pre3}
\end{eqnarray}
respectively. Here $(\cdot,\cdot,\cdot)$ denotes the associator which is defined by $(x,y,z):=(x*y)*z-x*(y*z)$. We denote a pre-Lie-Yamaguti algebra by $(A,*,\{\cdot,\cdot,\cdot\})$.
\end{defi}
\begin{rmk}
Let $(A,*,\{\cdot,\cdot,\cdot\})$ be a pre-Lie-Yamaguti algebra. If the binary operation $*$ is trivial, then the pre-Lie-Yamaguti algebra reduce to a pre-Lie triple system (\cite{BM}); If both the ternary operations $\{\cdot,\cdot,\cdot\}=0$ and $\{\cdot,\cdot,\cdot\}_D=0$, then the pre-Lie-Yamaguti algebra reduces to a pre-Lie algebra (\cite{Pre-lie algebra in geometry}). \emptycomment{Whereas in \cite{B. M} the notion of pre-Lie triples in an $\Omega$-algebra was introduced, whose product is given by $\{x,y,z\}=(x\cdot y)\cdot z$, where $x\cdot y=x\prec y-x\succ y$, which is a special case of our definition of pre-Lie triple systems.}
\end{rmk}

\begin{lem}
We have the following equalities:
\begin{eqnarray}
~ &&\label{pre1}\{[x,y]_C,z,w\}_D+\{[y,z]_C,x,w\}_D+\{[z,x]_C,y,w\}_D=0,\\
~ &&\label{pre8}\{x,y,\{z,w,t\}_D\}_D-\{\{x,y,z\}_D,w,t\}_D-\{\{x,y,z\},w,t\}_D+\{\{y,x,z\},w,t\}_D\\
~ &&\nonumber-\{z,\{x,y,w\}_D,t\}_D-\{z,\{x,y,w\},t\}_D+\{z,\{y,x,w\},t\}_D-\{z,w,\{x,y,t\}_D\}_D=0.
\end{eqnarray}
\end{lem}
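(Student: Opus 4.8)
The plan is to recognize both identities as the $D$-operator consequences \eqref{RLYc} and \eqref{RLY5a}, transported into the pre-Lie-Yamaguti setting, and then to organize the (otherwise unwieldy) computation around this observation. First I would record the structural dictionary: writing $\rho(x)v := x*v$ and $\mu(x,y)v := \{v,x,y\}$ for $x,y,v \in A$, the map $D_{\rho,\mu}$ of \eqref{rep} reproduces the $D$-operation, since a direct check from \eqref{pre3} gives $D_{\rho,\mu}(x,y)v = \{v,y,x\} - \{v,x,y\} + (y,x,v) - (x,y,v) = \{x,y,v\}_D$. Moreover $\{\cdot,\cdot,\cdot\}_D$ is antisymmetric in its first two slots, i.e. $\{x,y,z\}_D = -\{y,x,z\}_D$, immediately from \eqref{pre3}. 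Under this dictionary, axioms \eqref{pre2} and \eqref{pre4} are exactly \eqref{RLYb} and \eqref{RLYd} for the pair $(\rho,\mu)$, which is the common source of the two target identities.

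For \eqref{pre1}, the content is the cyclic-sum relation $\sum_{\mathrm{cyc}} D_{\rho,\mu}([x,y]_C,z) = 0$ evaluated on $w$, i.e. the analogue of \eqref{RLYc}. I would prove it by substituting \eqref{pre3} into each of the three summands, so that each becomes a combination of $\{\cdot,\cdot,\cdot\}$-terms, with the commutator $[x,y]_C$ sitting either in the middle or the last slot, together with associators. The middle-slot terms are rewritten by \eqref{pre2}, the last-slot terms by \eqref{pre4}, and the associator terms are expanded through the definition \eqref{pre10} of $[\cdot,\cdot]_C$. Summing over the three cyclic permutations makes all contributions cancel, and the antisymmetry of $\{\cdot,\cdot,\cdot\}_D$ keeps the number of surviving terms small.

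For \eqref{pre8}, the point is that it is precisely \eqref{RLY5a} for $(\rho,\mu)$. With $\Courant{x,y,z}_A := \{x,y,z\}_D + \{x,y,z\} - \{y,x,z\}$ (the ternary bracket of the associated Lie-Yamaguti algebra, for which \eqref{pre7} is exactly \eqref{RLYe}: indeed $\rho(\Courant{x,y,z}_A) = [D_{\rho,\mu}(x,y),\rho(z)]$ rewrites verbatim as \eqref{pre7}), substituting this expression into $D(\Courant{x,y,z}_A,w) + D(z,\Courant{x,y,w}_A) = [D(x,y),D(z,w)]$ and using $D = \{\cdot,\cdot,\cdot\}_D$ reproduces all eight terms of \eqref{pre8} once the pieces $\{x,y,z\}-\{y,x,z\}$ are split off. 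Concretely I would expand every outer $\{\cdot,\cdot,\cdot\}_D$ via \eqref{pre3} and reduce the resulting $\{\cdot,\cdot,\cdot\}$- and associator-expressions using the three ``fundamental'' axioms \eqref{pre5}, \eqref{pre6} and \eqref{pre7}.

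The main obstacle is the bookkeeping in \eqref{pre8}: after expanding the outer $D$-brackets one faces a large collection of terms cubic in $*$ and $\{\cdot,\cdot,\cdot\}$, and the cancellations only close after coordinated use of all three of \eqref{pre5}, \eqref{pre6}, \eqref{pre7}. To stay disciplined I would carry out the whole computation at the operator level, tracking $\rho(x)=x*-$, $\mu(x,y)=\{-,x,y\}$ and $D_{\rho,\mu}(x,y)=\{x,y,-\}_D$ together with their composites, so that \eqref{pre8} reduces to the single commutator relation $[D(x,y),D(z,w)] = D(\Courant{x,y,z}_A,w)+D(z,\Courant{x,y,w}_A)$, which is far easier to verify than its fully expanded scalar form. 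Since this lemma precedes the theorem identifying the associated Lie-Yamaguti algebra and its self-representation, I would, to avoid any appearance of circularity, present these expansions as a self-contained computation relying only on the defining axioms \eqref{pre2}--\eqref{pre7}, while using the representation viewpoint above purely as the organizing principle and as an independent consistency check.
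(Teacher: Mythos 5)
Your structural dictionary is correct and is, in substance, the right way to organize this lemma: with $\rho(x)=L_x$ and $\mu(x,y)=\huaR(x,y)$ one checks from \eqref{pre3} and \eqref{pre10} that $D_{L,\huaR}(x,y)z=\{x,y,z\}_D$, that \eqref{pre2}, \eqref{pre4}, \eqref{pre7}, \eqref{pre6} become exactly \eqref{RLYb}, \eqref{RLYd}, \eqref{RLYe}, \eqref{RLY5}, and that \eqref{pre1} and \eqref{pre8} are precisely \eqref{RLYc} and \eqref{RLY5a} for the pair $(L,\huaR)$. Since the paper's own proof consists of the single sentence ``It follows from straightforward computations,'' your operator-level route is a genuine elaboration rather than a deviation, and your worry about circularity is correctly resolved: the only facts you need from the subadjacent structure, namely $\sum_{c.p.}\bigl([[x,y]_C,z]_C+\Courant{x,y,z}_C\bigr)=0$ and $\Courant{x,y,[z,w]_C}_C=[\Courant{x,y,z}_C,w]_C+[z,\Courant{x,y,w}_C]_C$, follow from the definitions and from \eqref{pre4}, \eqref{pre7} alone, without invoking the later theorem.

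However, your explicit recipe for \eqref{pre1} is short one ingredient and, as literally written, would not close. After substituting \eqref{pre3}, each summand contains, besides the two $\{\cdot,\cdot,\cdot\}$-terms you treat with \eqref{pre2} and \eqref{pre4}, the block $(z,[x,y]_C,w)-([x,y]_C,z,w)=\bigl([L_{[x,y]_C},L_z]-L_{[[x,y]_C,z]_C}\bigr)w$, which is a pure $*$-expression; its cyclic sum does not vanish identically, and it cannot cancel against the mixed terms $x*\{w,z,y\}$, $\{y*w,x,z\}$, etc., produced by \eqref{pre2} and \eqref{pre4}, because no axiom-free identity relates the two kinds of monomials. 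One must convert $-\sum_{c.p.}L_{[[x,y]_C,z]_C}$ into $\sum_{c.p.}L_{\Courant{x,y,z}_C}=\sum_{c.p.}[D_{L,\huaR}(x,y),L_z]$ using the cyclic identity above together with \eqref{pre7}; then the Jacobi identity for operator commutators kills $\sum_{c.p.}[[L_x,L_y],L_z]$ and the surviving terms cancel against the \eqref{pre2}/\eqref{pre4} contributions --- exactly mirroring the standard proof of \eqref{RLYc}, which also relies on \eqref{RLYe}. A smaller correction of the same kind applies to \eqref{pre8}: the axioms actually consumed are \eqref{pre4}, \eqref{pre6} and \eqref{pre7} (plus Jacobi), while \eqref{pre5} is not needed. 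With these adjustments your plan is complete and correct.
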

\begin{proof}
  It follows from straightforward computations.
\end{proof}

\begin{ex}
{\rm Let $\g$ be the 2-dimensional Lie-Yamaguti algebra given in Example \ref{ex1}. Then there is a pre-Lie-Yamaguti algebra structure on $\g$ as follows:
$$e_2*e_2=ae_1,\quad e_2*e_1=-be_1,\quad \{e_1,e_2,e_2\}=b^2e_1,\quad \{e_2,e_2,e_2\}=-abe_1.$$}
\end{ex}

\begin{ex}
{\rm Let $\g$ be the 4-dimensional Lie-Yamaguti algebra given in Example \ref{ex2}. Then there is a pre-Lie-Yamaguti algebra structure on $\g$ as follows:
$$e_2*e_2=ae_1,\quad \{e_2,e_2,e_2\}=-a^2e_4.$$}
\end{ex}

Let $(A,*,\{\cdot,\cdot,\cdot\})$ be a pre-Lie-Yamaguti algebra. For any $x,y \in A$, define $L: A \to \gl(A)$ and $\huaR: \otimes^2A \to \gl(A)$ by $x \mapsto L_x$ and $(x,y)\mapsto \huaR(x,y)$ respectively, where $L_xz=x*z$ and $\huaR(x,y)z=\{z,x,y\}$ for all $z \in A$.
\begin{thm}\label{subad}
Let $(A,*,\{\cdot,\cdot,\cdot\})$ be a pre-Lie-Yamaguti algebra. Then
\begin{itemize}
\item [\rm (i)] the operation
\begin{eqnarray}
\label{subLY1}[x,y]_C&=&x*y-y*x,\\
~\label{subLY2}\Courant{x,y,z}_C&=&\{x,y,z\}_D+\{x,y,z\} -\{y,x,z\} , \quad \forall x,y,z \in \g,
\end{eqnarray}
defines a Lie-Yamaguti algebra structure on $A$, where $\{\cdot,\cdot,\cdot\}_D$ is given by \eqref{pre3}, which is called the {\bf subadjacent Lie-Yamaguti algebra} and denoted by $A^c$.  $(A,*,\{\cdot,\cdot,\cdot\})$ is called the {\bf compatible pre-Lie-Yamaguti algebra} of $(A,[\cdot,\cdot]_C,\Courant{\cdot,\cdot,\cdot}_C )$.

\item [\rm (ii)] with the above notations, $(A;L,\huaR)$ is a representation of the subadjacent Lie-Yamaguti algebra $A^c$ on $A$. Furthermore, the identity map $\Id$ is a relative Rota-Baxter operator on $A^c$ with respect to the representation $(A;L,\huaR)$.
\end{itemize}
\end{thm}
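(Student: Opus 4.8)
The plan is to verify that $(A,[\cdot,\cdot]_C,\Courant{\cdot,\cdot,\cdot}_C)$ satisfies the four Lie-Yamaguti axioms \eqref{LY1}--\eqref{fundamental}, that $(A;L,\huaR)$ satisfies the five representation axioms \eqref{RLYb}--\eqref{RLY5}, and finally that $\Id$ obeys the two defining equations of a relative Rota-Baxter operator. The strategy throughout is to translate each target identity, via the definitions \eqref{subLY1}, \eqref{subLY2}, $L_xz=x*z$, $\huaR(x,y)z=\{z,x,y\}$, and the associator notation, back into a statement purely about $*$, $\{\cdot,\cdot,\cdot\}$, $\{\cdot,\cdot,\cdot\}_D$ and the associator, and then match it against the pre-Lie-Yamaguti axioms \eqref{pre2}--\eqref{pre7} together with the auxiliary identities \eqref{pre1} and \eqref{pre8}.

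For part (i), I would first check \eqref{LY1}: expanding $\Courant{x,y,z}_C+\text{c.p.}$ and $[[x,y]_C,z]_C+\text{c.p.}$ in terms of $*$ and the associator, the $\{\cdot,\cdot,\cdot\}_D$ contributions should cancel against the associator terms hidden in the commutator expression, reducing the claim to the definition \eqref{pre3} of $\{\cdot,\cdot,\cdot\}_D$. The second axiom, $\Courant{[x,y]_C,z,w}_C+\text{c.p.}=0$, splits into a $\{\cdot,\cdot,\cdot\}_D$-part handled by \eqref{pre1} and a $(\{\cdot,\cdot,\cdot\}-\{\cdot,\cdot,\cdot\}$ with swapped arguments$)$-part handled by \eqref{pre2}. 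The third axiom $\Courant{x,y,[z,w]_C}_C=[\Courant{x,y,z}_C,w]_C+[z,\Courant{x,y,w}_C]_C$ should follow from \eqref{pre4} combined with the Leibniz-type content of \eqref{pre7}, and the fundamental identity \eqref{fundamental} should come from \eqref{pre5}, \eqref{pre6} and \eqref{pre8} combined. The key observation making these manageable is that $\Courant{\cdot,\cdot,\cdot}_C$ was deliberately defined so that $\huaR$ and $D_{L,\huaR}$ reassemble correctly; indeed one expects $D_{L,\huaR}(x,y)=\{x,y,\cdot\}_D$ acting on the right slot.

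For part (ii), I would verify the five representation axioms for $(A;L,\huaR)$. The crucial preliminary step is to compute $D_{L,\huaR}(x,y)$ explicitly from \eqref{rep}: substituting $\mu=\huaR$, $\rho=L$ gives $D_{L,\huaR}(x,y)z=\{z,y,x\}-\{z,x,y\}+x*(y*z)-y*(x*z)-[x,y]_C*z$, which by \eqref{pre3} and the associator identity should collapse to exactly $\{x,y,z\}_D$. Establishing this identification is the linchpin, since then axioms \eqref{RLYb}, \eqref{RLYd} become restatements of \eqref{pre2}, \eqref{pre4}; axiom \eqref{RLYe} becomes the $\rho$-version packaged in \eqref{pre7}; and the two quadratic-in-$\mu$ axioms \eqref{RYT4}, \eqref{RLY5} become \eqref{pre5}, \eqref{pre6}. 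Once $(A;L,\huaR)$ is a representation, the relative Rota-Baxter condition for $\Id$ is automatic: with $T=\Id$, $\rho(Tu)=L_u$, $\mu(Tu,Tv)=\huaR(u,v)$, so the two defining equations read $[u,v]_C=u*v-v*u$ and $\Courant{u,v,w}_C=D_{L,\huaR}(u,v)w+\huaR(v,w)u-\huaR(u,w)v=\{u,v,w\}_D+\{u,v,w\}-\{v,u,w\}$, which are precisely \eqref{subLY1} and \eqref{subLY2}.

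The main obstacle I anticipate is bookkeeping in the fundamental identity \eqref{fundamental} and in the quadratic representation axioms \eqref{RYT4}, \eqref{RLY5}: each expands into many $\{\cdot,\cdot,\cdot\}$ and $\{\cdot,\cdot,\cdot\}_D$ terms, and one must repeatedly invoke the definition \eqref{pre3} to convert between the two and trace cancellations carefully; the auxiliary lemma \eqref{pre1}, \eqref{pre8} exists precisely to absorb the $\{\cdot,\cdot,\cdot\}_D$ residue. Since all the required pre-Lie-Yamaguti axioms are available and the identification $D_{L,\huaR}(x,y)=\{x,y,\cdot\}_D$ does the heavy lifting, no genuinely new idea is needed beyond patient symbol-matching, so I would present part (ii) first (as it isolates the $D_{L,\huaR}$ computation) and then read off part (i) and the Rota-Baxter statement as consequences, or equivalently organize the verification axiom-by-axiom as above.
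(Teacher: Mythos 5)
Your proposal is correct and follows essentially the same route as the paper's proof: the same axiom-by-axiom matching (the first Lie--Yamaguti identity from the definition \eqref{pre3}, the second from \eqref{pre1} and \eqref{pre2}, the third from \eqref{pre4} and \eqref{pre7}, the fundamental identity from \eqref{pre5}, \eqref{pre6}, \eqref{pre8}), the same key identification $D_{L,\huaR}(x,y)z=\{x,y,z\}_D$ via \eqref{pre3}, and the same observation that the relative Rota--Baxter property of $\Id$ then reduces to \eqref{subLY1} and \eqref{subLY2}. The only difference is cosmetic (you suggest presenting part (ii) first), so no further comment is needed.
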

\begin{proof}
For all $x,y,z,w,t \in A$, by \eqref{pre3}, we have
\begin{eqnarray}
~ \label{sub1}&&[[x,y]_C,z]_C+c.p.+\Courant{x,y,z}_C+c.p.\\
~ \nonumber&=&\big((x*y-y*x)*z-z*(x*y-y*x)\big)+c.p.+\big(\{x,y,z\}_D+\{x,y,z\}-\{y,x,z\}\big)+c.p.\\
~ \nonumber&=&0.
\end{eqnarray}
By \eqref{pre2} and \eqref{pre1}, we have
\begin{eqnarray}
~ &&\Courant{[x,y]_C,z,w}_C+c.p.(x,y,z)\\
~ \nonumber&=&\big(\{[x,y]_C,z,w\}_D+\{[x,y]_C,z,w\}-\{z,[x,y]_C,w\}\big)+c.p.(x,y,z)\\
~ \nonumber&=&0.
\end{eqnarray}
Furthermore, by \eqref{pre4} and \eqref{pre7}, we have
\begin{eqnarray}
~ &&\Courant{x,y,[z,w]_C}_C-[\Courant{x,y,z}_C,w]_C-[z,\Courant{x,y,w}_C]_C\\
~ \nonumber&=&\{x,y,z*w\}_D+\{x,y,z*w\}-\{y,x,z*w\}-\{x,y,w*z\}_D\\
~ \nonumber&&-\{x,y,w*z\}+\{y,x,w*z\}-\{x,y,z\}_D*w+w*\{x,y,z\}_D\\
~ \nonumber&&-\{x,y,z\}*w+w*\{x,y,z\}+\{y,x,z\}*w-w*\{y,x,z\}\\
~ \nonumber&&-z*\{x,y,w\}_D+\{x,y,w\}_D*z-z*\{x,y,w\}+\{x,y,w\}*z\\
~ \nonumber&&+z*\{y,x,w\}-\{y,x,w\}*z=0.
\end{eqnarray}
Finally, by \eqref{pre5}, \eqref{pre6} and \eqref{pre8}, we have
\begin{eqnarray}
\Courant{x,y,\Courant{z,w,t}_C}_C=\Courant{\Courant{x,y,z}_C, w,t}_C+\Courant{z,\Courant{x,y,w}_C,t}_C+\Courant{z,w,\Courant{x,y,t}_C}_C.\label{sub4}
\end{eqnarray}
From \eqref{sub1}-\eqref{sub4}, we deduce that $(A,[\cdot,\cdot]_C,\Courant{\cdot,\cdot,\cdot}_C)$ is a Lie-Yamaguti algebra. This proves (i).

For all $x,y \in A$, $\huaL\triangleq D_{L,\huaR}$ is given by
\begin{eqnarray}
\huaL(x,y)=\huaR(y,x)-\huaR(x,y)+[L_x,L_y]-L([x,y]_C).
\end{eqnarray}
By \eqref{pre3}, we obtain that
\begin{eqnarray}
\huaL(x,y)z=\{x,y,z\}_D.\label{left}
\end{eqnarray}
By \eqref{pre2}, we have
\begin{eqnarray}
\huaR([x,y]_C,z)-\huaR(x,z)L_y+\huaR(y,z)L_x=0.\label{reg1}
\end{eqnarray}
By \eqref{pre4}, we have
\begin{eqnarray}
\huaR(x,[y,z]_C)-L_y\huaR(x,z)+L_z\huaR(x,y)=0.
\end{eqnarray}
By \eqref{pre7}, we have
\begin{eqnarray}
L_{\Courant{x,y,z}_C}=[\huaL(x,y),L_z].
\end{eqnarray}
By \eqref{pre5}, we have
\begin{eqnarray}
\huaR(z,w)\huaR(x,y)-\huaR(y,w)\huaR(x,z)-\huaR(x,\Courant{y,z,w}_C)+\huaL(y,z)\huaR(x,w)=0.
\end{eqnarray}
By \eqref{pre6}, we have
\begin{eqnarray}
\huaR(\Courant{x,y,z}_C,w)+\huaR(z,\Courant{x,y,w}_C)=[\huaL(x,y),\huaR(z,w)].\label{reg7}
\end{eqnarray}
By \eqref{reg1}-\eqref{reg7}, we deduce that $(A;L,\huaR)$ is a representation of the Lie-Yamaguti algebra $A^c$ on $A$. It is straightforward to deduce that $\Id$ is a relative Rota-Baxter operator on $A^c$ with respect to $(A;L,\huaR)$. This proves (ii).
\end{proof}
\emptycomment{
\begin{rmk}
 The Lie-Yamaguti algebra determined by \eqref{subLY1} and \eqref{subLY2} is called the {\bf sub-adjacent Lie-Yamaguti algebra} of $(A,*,\{\cdot,\cdot,\cdot\})$, denoted by $A^c$, and $(A,*,\{\cdot,\cdot,\cdot\})$ is called the {\bf compatible pre-Lie-Yamaguti algebra} of $(A,[\cdot,\cdot]_C,\Courant{\cdot,\cdot,\cdot}_C )$.The above theorem tells us that a pre-Lie-Yamaguti algebra is endowed with a Lie-Yamaguti algebra structure naturally, which may be called Lie-Yamaguti-admissible parallel to Lie-admissible or $F$-manifold-admissible in the version of Lie or $F$-manifold algebras \cite{BCM1,BCM2,Liu Bai Sheng}. To be converse, however, it is not true in general, i.e. given a Lie-Yamaguti algebra, there may not be a pre-Lie-Yamaguti algebra. One of our tasks is to study the sufficient and necessary condition when a Lie-Yamaguti algebra is endowed with a compatible pre-Lie-Yamaguti algebra.
\end{rmk}
}

\begin{rmk}
Let $(A,*,\{\cdot,\cdot,\cdot\})$ be a pre-Lie-Yamaguti algebra. According to Theorem \ref{subad}, $(L,\huaR)$ is a representation of  $A^c$ on itself. Therefore, $(A^*;L^*,-\huaR^*\tau)$ is the dual representation of $A^c$ on $A^*$ and the corresponding semidirect product Lie-Yamaguti algebra is $A^c\ltimes_{L^*,-\huaR^*\tau}A^*$  will be used to construct the phase space in the next section.
\end{rmk}

The following theorem demonstrates the relation between relative Rota-Baxter operators and pre-Lie-Yamaguti algebras.
\begin{thm}\label{pre}
Let $T: V \to \g$ be a relative Rota-Baxter operator on a Lie-Yamaguti algebra $(\g,[\cdot,\cdot],\Courant{\cdot,\cdot,\cdot})$ with respect to a representation $(V;\rho,\mu)$. Define two linear maps $*:\otimes^2V \to V$ and $\{\cdot,\cdot,\cdot\}:\otimes^3V \to V$ by for all $u,v,w \in V$,
\begin{eqnarray}
u*v=\rho(Tu)v,\quad \{u,v,w\}=\mu(Tv,Tw)u.\label{preO}
\end{eqnarray}
Then $(V,*,\{\cdot,\cdot,\cdot\})$ is a pre-Lie-Yamaguti algebra.
\end{thm}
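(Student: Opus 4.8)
The plan is to verify the five defining identities \eqref{pre2}, \eqref{pre4}, \eqref{pre5}, \eqref{pre6} and \eqref{pre7} by translating each, through the formulas $u*v=\rho(Tu)v$ and $\{u,v,w\}=\mu(Tv,Tw)u$, into a statement about $\rho$, $\mu$ and $D_{\rho,\mu}$ evaluated on the images $Tu,Tv,\dots$, and then recognizing that statement as one of the representation axioms \eqref{RLYb}--\eqref{RLY5} read at those images. Before doing so I would record two preliminary identities. First, from \eqref{pre10} the commutator is $[u,v]_C=\rho(Tu)v-\rho(Tv)u$, so the binary defining relation of $T$ gives $T[u,v]_C=[Tu,Tv]$; that is, $T$ intertwines $[\cdot,\cdot]_C$ with $[\cdot,\cdot]$. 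Second, and this is the crucial one, I claim that
\begin{eqnarray*}
\{u,v,w\}_D=D_{\rho,\mu}(Tu,Tv)w.
\end{eqnarray*}
To see this I would expand the two associators occurring in \eqref{pre3}: using $v*u-u*v=-[u,v]_C$ together with $T[u,v]_C=[Tu,Tv]$, the difference $(v,u,w)-(u,v,w)$ collapses into $-\rho([Tu,Tv])w+[\rho(Tu),\rho(Tv)]w$, while the two ternary terms $\{w,v,u\}-\{w,u,v\}$ contribute $\mu(Tv,Tu)w-\mu(Tu,Tv)w$; comparison with the definition \eqref{rep} of $D_{\rho,\mu}$ yields the claim.

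With these two identities in hand the first two axioms are immediate. For \eqref{pre2}, substituting the definitions and using $T[x,y]_C=[Tx,Ty]$ turns the left-hand side into $\big(\mu([Tx,Ty],Tw)-\mu(Tx,Tw)\rho(Ty)+\mu(Ty,Tw)\rho(Tx)\big)z$, which is exactly \eqref{RLYb} evaluated at $Tx,Ty,Tw$. Similarly \eqref{pre4} becomes $\mu(Ty,[Tz,Tw])x=\big(\rho(Tz)\mu(Ty,Tw)-\rho(Tw)\mu(Ty,Tz)\big)x$, i.e.\ \eqref{RLYd} at $Ty,Tz,Tw$.

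The remaining three axioms follow the same template but additionally use the ternary defining relation \eqref{Oopera} to gather terms. For \eqref{pre7}, the three summands on the left combine, via \eqref{Oopera}, into $\rho(\Courant{Tx,Ty,Tz})w$, while the right-hand side becomes $[D_{\rho,\mu}(Tx,Ty),\rho(Tz)]w$; these agree by \eqref{RLYe}. For \eqref{pre5}, the three terms carrying an inner bracket of type $\{z,w,t\}$ assemble, through \eqref{Oopera}, into $-\mu(Ty,\Courant{Tz,Tw,Tt})x$, and the whole identity reduces to \eqref{RYT4} at $Ty,Tz,Tw,Tt$. For \eqref{pre6}, the six terms on the left group into $\mu(\Courant{Tx,Ty,Tw},Tt)z+\mu(Tw,\Courant{Tx,Ty,Tt})z$ and the right-hand side into $[D_{\rho,\mu}(Tx,Ty),\mu(Tw,Tt)]z$, which match by \eqref{RLY5}.

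The main obstacle is the second preliminary identity $\{u,v,w\}_D=D_{\rho,\mu}(Tu,Tv)w$: it is the only place where the associators and the binary Rota-Baxter relation must be manipulated directly, and every subsequent reduction depends on it, since it is what converts the derived bracket $\{\cdot,\cdot,\cdot\}_D$ into the operator $D_{\rho,\mu}$ appearing throughout the representation axioms. Once it is established, each of the five verifications is a bookkeeping exercise in matching terms against the corresponding axiom among \eqref{RLYb}, \eqref{RLYd}, \eqref{RLYe}, \eqref{RYT4} and \eqref{RLY5}.
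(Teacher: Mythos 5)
Your proposal is correct and follows essentially the same route as the paper: both first establish $\{u,v,w\}_D=D_{\rho,\mu}(Tu,Tv)w$ by expanding the associators via the binary Rota-Baxter relation, and then reduce each of the five axioms \eqref{pre2}, \eqref{pre4}, \eqref{pre5}, \eqref{pre6}, \eqref{pre7} to the corresponding representation axiom \eqref{RLYb}, \eqref{RLYd}, \eqref{RYT4}, \eqref{RLY5}, \eqref{RLYe} evaluated at images of $T$, using \eqref{Oopera} to collapse the inner brackets. No gaps; your identification of the key preliminary identity matches the paper's opening computation exactly.
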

\begin{proof}

For all $u,v,w \in V$, we have
\begin{eqnarray*}
~ &&\mu(Tv,Tu)w-\mu(Tu,Tv)w+[\rho(Tu),\rho(Tv)]w-\rho([Tu,Tv])w,\\
~ &=&\{w,v,u\}-\{w,u,v\}+(v,u,w)-(u,v,w),
\end{eqnarray*}
that is
$$D_{\rho,\mu}(Tu,Tv)w=\{u,v,w\}_D.$$
Since $T$ be a relative Rota-Baxter operator on $\g$ with respect to a representation $(V;\rho,\mu)$, for all $u,v,w,t,s \in V$, we have the following equalities:
\begin{eqnarray}
~ \label{ppre1}&&\{w,[u,v]_C,t\}-\{v*w,u,t\}+\{u*w,v,t\}\\
~ \nonumber&=&\mu(T(\rho(Tu)v-\rho(Tv)u),Tt)w-\mu(Tu,Tt)\rho(Tv)w+\mu(Tv,Tt)\rho(Tu)w\\
~ \nonumber&=&\mu([Tu,Tv],Tt)w-\mu(Tu,Tt)\rho(Tv)w+\mu(Tv,Tt)\rho(Tu)w\\
~ \nonumber&=&0,
\end{eqnarray}
\begin{eqnarray}
~ &&\{u,v,[w,t]_C\}-w*\{u,v,t\}+t*\{u,v,w\}\\
~ \nonumber&=&\mu(Tv,[Tw,Tt])u-\rho(Tw)\mu(Tv,Tt)u+\rho(Tt)\mu(Tv,Tw)u\\
~ \nonumber&=&0,
\end{eqnarray}
\begin{eqnarray}
~ &&\{u,v,w\}_D*t+\{u,v,w\}*t-\{v,u,w\}*t-\{u,v,w*t\}_D+w*\{u,v,t\}_D\\
~ \nonumber&=&\rho(\Courant{Tu,Tv,Tw})t-[D_{\rho,\mu}(Tu,Tv),\rho(Tw)]t\\
~ \nonumber&=&0,
\end{eqnarray}
\begin{eqnarray}
~ &&\{\{u,v,w\},t,s\}-\{\{u,v,t\},w,s\}-\{u,v,\{w,t,s\}_D\}-\{u,v,\{w,t,s\}\}\\
~ \nonumber&&+\{u,v,\{t,w,s\}\}+\{w,t,\{u,v,s\}\}_D\\
~ \nonumber&=&\mu(Tt,Ts)\mu(Tv,Tw)u-\mu(Tw,Ts)\mu(Tv,Tt)u-\mu(Tv,\Courant{Tw,Tt,Ts})u\\
~ \nonumber&&+D_{\rho,\mu}(Tw,Tt)\mu(Tv,Ts)u\\
~ \nonumber&=&0,
\end{eqnarray}
and
\begin{eqnarray}
~ \label{ppre8}&&\{u,v,\{w,t,s\}\}_D-\{\{u,v,w\}_D,t,s\}-\{w,\{u,v,t\}_D,s\}-\{w,\{u,v,t\},s\}\\
~ \nonumber&&+\{w,\{v,u,t\},s\}-\{w,t,\{u,v,s\}_D\}-\{w,t,\{u,v,s\}\}+\{w,t,\{v,u,s\}\}\\
~ \nonumber&=&[D_{\rho,\mu}(Tu,Tv),\mu(Tt,Ts)]w-\mu(\Courant{Tu,Tv,Tt},Ts)w-\mu(Tt,\Courant{Tu,Tv,Ts})w\\
~ \nonumber&=&0.
\end{eqnarray}

By \eqref{ppre1}-\eqref{ppre8}, $(V,*,\{\cdot,\cdot,\cdot\})$ is a pre-Lie-Yamaguti algebra.
\end{proof}
\begin{cor}\label{corr}
With the above conditions, $T$ is a homomorphism from the sub-adjacent Lie-Yamaguti algebra $(V,[\cdot,\cdot]_C,\Courant{\cdot,\cdot,\cdot}_C)$ to $(\g,[\cdot,\cdot],\Courant{\cdot,\cdot,\cdot})$, where $([\cdot,\cdot]_C,\Courant{\cdot,\cdot,\cdot}_C)$ are given by for all $u,v,w \in V$,
\begin{eqnarray*}
~[u,v]_C&=&u*v-v*u=\rho(Tu)v-\rho(Tv)u,\\
~\Courant{u,v,w}_C&=&D_{\rho,\mu}(Tu,Tv)w+\mu(Tv,Tw)u-\mu(Tu,Tw)v.
\end{eqnarray*}
 Furthermore, $T(V)=\{T(v):v \in V\}\subset \g$ is a Lie-Yamaguti subalgebra of $\g$ and there is an induced pre-Lie-Yamaguti algebra structure on $T(V)$ given by
 \begin{eqnarray*}
 ~ Tu*Tv&=&T(u*v),\\
 ~ \{Tu,Tv,Tw\}&=&T\{u,v,w\}, \quad \forall u,v,w \in V.
 \end{eqnarray*}
 \end{cor}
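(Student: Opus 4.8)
The plan is to verify the three assertions in turn, each of which reduces to the two defining identities of the relative Rota-Baxter operator $T$ together with Theorem \ref{pre}.

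First, for the homomorphism claim I would simply read off the defining equations of $T$. By definition $[u,v]_C=\rho(Tu)v-\rho(Tv)u$, so the binary defining identity gives $T([u,v]_C)=[Tu,Tv]$ at once. For the ternary bracket, recall from the proof of Theorem \ref{pre} that $D_{\rho,\mu}(Tu,Tv)w=\{u,v,w\}_D$, whence $\Courant{u,v,w}_C=D_{\rho,\mu}(Tu,Tv)w+\mu(Tv,Tw)u-\mu(Tu,Tw)v$; applying \eqref{Oopera} yields $T(\Courant{u,v,w}_C)=\Courant{Tu,Tv,Tw}$. Thus $T$ is a homomorphism from the subadjacent Lie-Yamaguti algebra $(V,[\cdot,\cdot]_C,\Courant{\cdot,\cdot,\cdot}_C)$, which is a Lie-Yamaguti algebra by Theorem \ref{subad}(i), to $(\g,[\cdot,\cdot],\Courant{\cdot,\cdot,\cdot})$.

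Second, the subalgebra claim is an immediate consequence: the right-hand sides of the two defining identities of $T$ both lie in the image of $T$, so $[Tu,Tv]\in T(V)$ and $\Courant{Tu,Tv,Tw}\in T(V)$, which shows $T(V)$ is closed under both brackets.

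Third, and this is the only point requiring genuine work, I would establish that the formulas $Tu*Tv:=T(u*v)$ and $\{Tu,Tv,Tw\}:=T\{u,v,w\}$ are well-defined, i.e.\ independent of the chosen preimages; equivalently, that $\Ker T$ is an ideal of the pre-Lie-Yamaguti algebra $(V,*,\{\cdot,\cdot,\cdot\})$ produced by Theorem \ref{pre}. By multilinearity it suffices to check that inserting a kernel element into any slot lands in $\Ker T$. The slots carrying an explicit $Tu=0$ factor, namely $u*v=\rho(Tu)v$ with $u\in\Ker T$ and the second and third slots of $\{u,v,w\}=\mu(Tv,Tw)u$, vanish outright since $\rho(0)=0$ and $\mu(0,\cdot)=\mu(\cdot,0)=0$. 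The two remaining slots are handled by the defining identities themselves: for $w\in\Ker T$ the binary identity gives $T(v*w)=T(\rho(Tv)w)=[Tv,Tw]=[Tv,0]=0$, and for $u\in\Ker T$ equation \eqref{Oopera} gives $T\{u,v,w\}=T(\mu(Tv,Tw)u)$, because $D_{\rho,\mu}(Tu,Tv)=D_{\rho,\mu}(0,Tv)=0$ by \eqref{rep} and $\mu(Tu,Tw)=\mu(0,Tw)=0$, and this equals $\Courant{Tu,Tv,Tw}=\Courant{0,Tv,Tw}=0$. Hence $\Ker T$ is an ideal, the two operations descend to $T(V)\cong V/\Ker T$, and since $T$ then becomes a surjective map intertwining $*$ and $\{\cdot,\cdot,\cdot\}$, and therefore also the derived operations $[\cdot,\cdot]_C$ and $\{\cdot,\cdot,\cdot\}_D$ which are multilinear combinations of them, the axioms \eqref{pre2}--\eqref{pre7} push forward from $V$ to $T(V)$.

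I expect the main obstacle to be precisely this well-definedness verification in the third part; the first two parts are direct substitutions into the defining equations of $T$. Once well-definedness is in hand no further computation is needed, because the pre-Lie-Yamaguti axioms are inherited from $V$ through the surjective structure-preserving map $T$.
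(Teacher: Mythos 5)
Your proposal is correct. The paper states this corollary without proof, treating it as an immediate consequence of Theorem \ref{pre} and the defining identities of $T$, and your first two parts are exactly that direct substitution. Your third part supplies a detail the paper leaves implicit: since $T$ need not be injective, the formulas $Tu*Tv=T(u*v)$ and $\{Tu,Tv,Tw\}=T\{u,v,w\}$ require checking that $\Ker T$ absorbs all operations, and your slot-by-slot verification (using bilinearity of $\rho,\mu$, the identity $D_{\rho,\mu}(0,Tv)=0$ from \eqref{rep}, and the two defining equations of $T$) settles this correctly, after which the pre-Lie-Yamaguti axioms on $T(V)\cong V/\Ker T$ are inherited through the surjection as you say.
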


At the end of this section, we give a necessary and sufficient condition for a Lie-Yamaguti algebra admitting a compatible pre-Lie-Yamaguti algebra.
 \begin{pro}\label{ppre}
 Let $(\g,[\cdot,\cdot],\Courant{\cdot,\cdot,\cdot})$ be a Lie-Yamaguti algebra. Then there exists a compatible pre-Lie-Yamaguti algebra structure on $\g$ if and only if there exists an invertible relative Rota-Baxter operator $T:V \to \g$ on $\g$ with respect to a suitable representation $(V;\rho,\mu)$.
 \end{pro}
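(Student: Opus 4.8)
The plan is to prove both directions of the equivalence by exploiting the structural results already established, namely Theorem \ref{subad} and Theorem \ref{pre}, together with the observation that an \emph{invertible} relative Rota-Baxter operator can be used to transport a pre-Lie-Yamaguti structure from a representation space back onto $\g$ itself.

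First I would prove the backward direction (the ``if'' part). Suppose $T:V \to \g$ is an invertible relative Rota-Baxter operator with respect to a representation $(V;\rho,\mu)$. By Theorem \ref{pre}, the operations $u*v=\rho(Tu)v$ and $\{u,v,w\}=\mu(Tv,Tw)u$ endow $V$ with a pre-Lie-Yamaguti algebra structure. Now I transport this structure to $\g$ via the linear isomorphism $T$: define, for all $a,b,c \in \g$,
\begin{eqnarray*}
a*_\g b&=&T\big((T^{-1}a)*(T^{-1}b)\big)=T\big(\rho(a)T^{-1}b\big),\\
\{a,b,c\}_\g&=&T\big(\{T^{-1}a,T^{-1}b,T^{-1}c\}\big)=T\big(\mu(b,c)T^{-1}a\big).
\end{eqnarray*}
Since $T$ is an isomorphism of vector spaces, $(\g,*_\g,\{\cdot,\cdot,\cdot\}_\g)$ is automatically a pre-Lie-Yamaguti algebra (axioms transport along isomorphisms). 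The remaining point is that this pre-Lie-Yamaguti structure is \emph{compatible} with the original $(\g,[\cdot,\cdot],\Courant{\cdot,\cdot,\cdot})$, i.e. that its subadjacent Lie-Yamaguti algebra $\g^c$ in the sense of \eqref{subLY1}--\eqref{subLY2} recovers $[\cdot,\cdot]$ and $\Courant{\cdot,\cdot,\cdot}$. This is exactly the content of Corollary \ref{corr}: $T$ is a homomorphism from the subadjacent algebra $(V,[\cdot,\cdot]_C,\Courant{\cdot,\cdot,\cdot}_C)$ to $\g$, and because $T$ is invertible this homomorphism is an isomorphism, so the transported subadjacent brackets on $\g$ coincide with the given ones. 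Thus $(\g,*_\g,\{\cdot,\cdot,\cdot\}_\g)$ is a compatible pre-Lie-Yamaguti algebra.

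For the forward direction (the ``only if'' part), suppose $(\g,*,\{\cdot,\cdot,\cdot\})$ is a compatible pre-Lie-Yamaguti algebra, so that its subadjacent Lie-Yamaguti algebra $\g^c$ equals the given $(\g,[\cdot,\cdot],\Courant{\cdot,\cdot,\cdot})$. By Theorem \ref{subad}(ii), $(\g;L,\huaR)$ is a representation of $\g^c=\g$ on itself, and the identity map $\Id:\g \to \g$ is a relative Rota-Baxter operator on $\g$ with respect to $(\g;L,\huaR)$. Taking $V=\g$, $\rho=L$, $\mu=\huaR$, and $T=\Id$ produces the required relative Rota-Baxter operator, which is trivially invertible. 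This completes the equivalence.

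The main obstacle, and the step deserving the most care, is the compatibility verification in the backward direction: one must check that transporting the subadjacent brackets of $V$ along $T$ genuinely reproduces the original brackets of $\g$ rather than some a priori different Lie-Yamaguti structure. I expect this to follow cleanly from Corollary \ref{corr} once invertibility is invoked, since that corollary already identifies $T$ as a Lie-Yamaguti homomorphism intertwining $\Courant{\cdot,\cdot,\cdot}_C$ with $\Courant{\cdot,\cdot,\cdot}$ and $[\cdot,\cdot]_C$ with $[\cdot,\cdot]$; the only subtlety is phrasing the transport of operations correctly so that $T^{-1}$ appears where needed and the induced structure on $\g=T(V)$ matches the one described at the end of Corollary \ref{corr}.
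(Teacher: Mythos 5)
Your proposal is correct and follows essentially the same route as the paper: both directions rest on Theorem \ref{pre} together with Corollary \ref{corr} to transport the pre-Lie-Yamaguti structure to $T(V)=\g$ and verify compatibility, and on the observation that the identity map is an invertible relative Rota-Baxter operator with respect to $(\g;L,\huaR)$ for the converse. The paper merely writes out explicitly the computation that you delegate to Corollary \ref{corr}.
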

 \begin{proof}
 Let $T$ be an invertible relative Rota-Baxter operator on $\g$ with respect to a representation $(V;\rho,\mu)$. By Theorem \ref{pre}, there exists a pre-Lie-Yamaguti algebra structure on $V$ given by for all $u,v,w \in V$,
 \begin{eqnarray*}
u*v=\rho(Tu)v, \quad \{u,v,w\}=\mu(Tv,Tw)u,\quad \{u,v,w\}_D=D_{\rho,\mu}(Tu,Tv)w.
\end{eqnarray*}
 Moreover, by Corollary \ref{corr}, there is an induced pre-Lie-Yamaguti algebra structure on $T(V)$ given by for all $x,y,z \in \g$,
 \begin{eqnarray*}
 x*y=T\rho(x)T^{-1}(y), \quad \{x,y,z\}=T\mu(y,z)T^{-1}(x),\quad \{x,y,z\}_D=TD_{\rho,\mu}(x,y)T^{-1}(z).
 \end{eqnarray*}
Since $T$ is invertible, there exist $u,v,w \in V$, such that $x=T(u),y=T(v),z=T(w)$. By the definition of a relative Rota-Baxter operator, we have
\begin{eqnarray*}
 ~[x,y]&=&T\big(\rho(Tu)v-\rho(Tv)u\big)=T\big(\rho(x)T^{-1}(y)-\rho(y)T^{-1}(x)\big)\\
 ~ &=&x*y-y*x,\\
 ~ \Courant{x,y,z}&=&T\big(D_{\rho,\mu}(Tu,Tv)w+\mu(Tv,Tw)u-\mu(Tu,Tw)v\big)\\
 ~ &=&T\big(D_{\rho,\mu}(x,y)T^{-1}(z)+\mu(y,z)T^{-1}(x)-\mu(x,z)T^{-1}(y)\big)\\
 ~ &=&\{x,y,z\}_D+\{x,y,z\}-\{y,x,z\}.
 \end{eqnarray*}
 Thus $(\g,*,\{\cdot,\cdot,\cdot\})$ is a compatible pre-Lie-Yamaguti algebra. Conversely, the identity map $\Id$ is a relative Rota-Baxter operator on the sub-adjacent Lie-Yamaguti algebra of a pre-Lie-Yamaguti algebra $(A,*,\{\cdot,\cdot,\cdot\})$ with respect to the representation $(A;L,\huaR)$.
 \end{proof}

 \section{Symplectic structures and phase spaces of Lie-Yamaguti algebras}\label{sec:sym}
\emptycomment{In this section, we are going to deal with the key object of this paper: symplectic structures on Lie-Yamaguti algebras. Symplectic Lie or $3$-Lie algebras were studied in \cite{A A1,B.G.S,Bairp}. Remember that one of aim of this paper is to study under what conditions does a Lie-Yamaguti algebra admit a compatible pre-Lie-Yamaguti algebra. Having defined the symplectic Lie-Yamaguti algebra, we introduce the notion of phase space of a Lie-Yamaguti algebra, one of a sufficient and necessary condition for its owning a compatible pre-Lie-Yamaguti algebra structure. Sequentially, we introduce the notion of Manin triples of a pre-Lie-Yamaguti algebra, corresponding one-to-one to perfect phase spaces of Lie-Yamaguti algebras. The Lie or $3$-Lie version of this problems are deeply studied in \cite{BCM1,BCM2,T.S}. Now let us introduce the notion of symplectic structures on Lie-Yamaguti algebras.}
In this section, first  we introduce the notion of a  symplectic structure on a Lie-Yamaguti algebra and study its relation with relative Rota-Baxter operators. Then we introduce the notions of phase spaces of Lie-Yamaguti algebras and Manin triples of pre-Lie-Yamaguti algebras, and we prove that there is a one-to-one correspondence between perfect phase spaces of a Lie-Yamaguti algebra and   Manin triples of pre-Lie-Yamaguti algebras.

 \begin{defi}
 Let $(\g,[\cdot,\cdot],\Courant{\cdot,\cdot,\cdot})$ be a Lie-Yamaguti algebra. A {\bf symplectic structure} on $\g$ is a nondegenerate, skew-symmetric bilinear form $\omega \in \wedge^2\g^*$ such that for all $x,y,z,w \in \g$,
 \begin{eqnarray}
 ~ &&\label{syml}\omega(x,[y,z])+\omega(y,[z,x])+\omega(z,[x,y])=0,\\
 ~ &&\label{sym2}\omega(z,\Courant{x,y,w})-\omega(x,\Courant{w,z,y})+\omega(y,\Courant{w,z,x})-\omega(w,\Courant{x,y,z})=0.
 \end{eqnarray}
 A Lie-Yamaguti algebra $\g$ with a symplectic structure $\omega$ is called a {\bf symplectic Lie-Yamaguti algebra}, denoted by $(\g,[\cdot,\cdot],\Courant{\cdot,\cdot,\cdot};\omega)$ or $(\g;\omega)$ for short.
 \end{defi}

 \begin{ex}
 {\rm Let $\g$ be the 2-dimensional Lie-Yamaguti algebra as given in Example \ref{ex1}. Then it is straightforward to see that $\omega=e_1^*\wedge e_2^*$ is a symplectic structure on $\g$,
where $\{e_1^*,e_2^*\}$ is the dual basis of $\{e_1,e_2\}$ for $\g.$}
 \end{ex}

 \emptycomment{

 \begin{ex}
 {\rm Let $\g$ be the 4-dimensional Lie-Yamaguti algebra as given in Example \ref{ex2} and $\{e_1^*,e_2^*,e_3^*,e_4^*\}$ its basis dual to $\{e_1,e_2,e_3,e_4\}$. Then it is straightforward to see that any nondegenerate skew-symmetric bilinear form is a symplectic structure on $\g$. More precisely, $\omega$ spanned by $\{e_1^*\wedge _2^*,e_1^*\wedge _3^*,e_1^*\wedge _4^*,e_2^*\wedge _3^*,e_2^*\wedge _4^*,e_3^*\wedge _4^*\}$ which is a basis for $\wedge^2\g^*$, is a symplectic structure on $\g$. In particular,
 $$\omega_1=e_3^*\wedge e_1^*+e_4^*\wedge e_2^*,\quad \omega_2=e_1^*\wedge e_4^*+e_2^*\wedge e_3^*,\quad \omega_3=e_1^*\wedge e_4^*+e_3^*\wedge e_2^*,$$
 $$\omega_4=e_1^*\wedge e_2^*+e_4^*\wedge e_3^*,\quad \omega_5=e_1^*\wedge e_2^*+e_3^*\wedge e_4^*,\quad\omega_6=e_1^*\wedge e_3^*+e_4^*\wedge e_2^*$$
 are symplectic structures on $\g$.}
 \end{ex}
 }

 Let $(\g,[\cdot,\cdot],\Courant{\cdot,\cdot,\cdot})$ be a Lie-Yamaguti algebra and $\g^*$ its dual. Suppose that $T:\g^* \to \g$ is an invertible linear map and skew-symmetric in the sense that
 \begin{eqnarray}
 \langle\alpha,T(\beta)\rangle+\langle\beta,T(\alpha)\rangle=0,\quad \forall \alpha, \beta \in \g^*,\label{skew}
 \end{eqnarray}
 where $\langle \cdot,\cdot \rangle$ denotes the pairing between $\g$ and $\g^*$.
 Define a bilinear form $\omega \in \wedge^2\g^*$ by
 \begin{eqnarray}
 \label{symT}\omega(x,y)=\langle T^{-1}(x),y\rangle, \quad \forall x,y \in \g.\label{invert}
  \end{eqnarray}
   \begin{thm}\label{Rbsym}
 With the assumptions as above, $(\g,\omega)$ is a symplectic Lie-Yamaguti algebra if and only if $T:\g^* \to \g$ is a skew-symmetric relative Rota-Baxter operator on $\g$ with respect to the coadjoint representation $(\g^*,\ad^*,-\mathfrak{R}^*{\tau})$.
 \end{thm}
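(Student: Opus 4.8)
The plan is to translate the defining identities \eqref{syml}--\eqref{sym2} of a symplectic structure directly into the two operator identities defining a relative Rota-Baxter operator with respect to the coadjoint representation $(\g^*,\ad^*,-\mathfrak{R}^*\tau)$, using the change of variables $\alpha=T^{-1}(x)$, $\beta=T^{-1}(y)$, $\gamma=T^{-1}(z)$. First I would record what the symplectic condition means in terms of $T$: setting $x=T(\alpha)$ etc. in \eqref{invert} gives $\omega(T(\alpha),T(\beta))=\langle\alpha,T(\beta)\rangle$, so that the skew-symmetry \eqref{skew} of $T$ is exactly the skew-symmetry of $\omega$, and nondegeneracy of $\omega$ is exactly invertibility of $T$. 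This reduces the theorem to showing that \eqref{syml} is equivalent to the binary Rota-Baxter identity $[T\alpha,T\beta]=T(\ad^*_{T\alpha}\beta-\ad^*_{T\beta}\alpha)$, and \eqref{sym2} is equivalent to the ternary identity \eqref{Oopera} with $(\rho,\mu)=(\ad^*,-\mathfrak{R}^*\tau)$.

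For the binary part, I would pair both sides of the desired Rota-Baxter identity against an arbitrary $\gamma\in\g^*$, write $\gamma=T^{-1}(z)$, and use the definition \eqref{pair1} of $\ad^*$ together with \eqref{invert}. The left side $\langle\gamma,[T\alpha,T\beta]\rangle$ becomes $\omega(z,[x,y])$, while the terms $\langle\gamma,T\ad^*_{T\alpha}\beta\rangle$ unwind via \eqref{skew} and \eqref{pair1} into $\pm\omega(x,[y,z])$-type expressions; collecting the three resulting terms reproduces the cyclic sum \eqref{syml} up to sign, so equivalence follows since $\gamma$ (hence $z$) is arbitrary and $T$ is invertible. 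The main bookkeeping burden here is keeping the signs straight through the two minus signs (one from $\ad^*$, one from the skew-symmetry of $T$), but it is routine.

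The ternary part is where the real work lies. I would again pair \eqref{Oopera} against $\gamma=T^{-1}(w)$ and expand $D_{\ad^*,-\mathfrak{R}^*\tau}$ using Lemma \ref{dual rep0}, which identifies it with $\mathfrak{L}^*$, so that $D_{\ad^*,-\mathfrak{R}^*\tau}(T\alpha,T\beta)$ acts as minus the dual of $\mathfrak{L}(x,y)=\Courant{x,y,\cdot}$. The three summands on the right of \eqref{Oopera} then convert, via \eqref{pair2}--\eqref{pair3}, the skew-symmetry \eqref{skew}, and the identity $\mathfrak{R}(x,y)z=\Courant{z,x,y}$, into the four bracket-pairings $\omega(w,\Courant{x,y,z})$, $\omega(z,\Courant{x,y,w})$, and the two $\omega(\cdot,\Courant{w,z,\cdot})$ terms appearing in \eqref{sym2}. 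The hard part will be verifying that the specific combination of $\mu=-\mathfrak{R}^*\tau$ and $D=\mathfrak{L}^*$ in \eqref{Oopera} assembles into precisely the four-term alternating sum \eqref{sym2} rather than some other quadruple; this requires carefully exploiting the switch $\tau$ (which turns $\mathfrak{R}(Tv,Tw)$-pairings into $\Courant{\cdot,y,x}$ with arguments transposed) and matching it against the transposed entries $\Courant{w,z,y}$ in \eqref{sym2}. Once both operator identities are shown equivalent to \eqref{syml} and \eqref{sym2} respectively, the biconditional of the theorem is immediate.
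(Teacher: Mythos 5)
Your proposal is correct and follows essentially the same route as the paper: both arguments use the substitution $x=T(\alpha),\dots,w=T(\delta)$ together with \eqref{skew} and \eqref{invert} to pair every term of \eqref{syml} and \eqref{sym2} against a single arbitrary covector, and then identify the resulting expressions with the two defining identities of a relative Rota-Baxter operator for $(\g^*,\ad^*,-\mathfrak{R}^*\tau)$, using that $D_{\ad^*,-\mathfrak{R}^*\tau}=\mathfrak{L}^*$ via Lemma \ref{dual rep0}. The only difference is cosmetic (you start from the operator identities and recover \eqref{syml}--\eqref{sym2}, while the paper goes the other way), which is immaterial for an equivalence.
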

 \begin{proof}
 It is easy to see that \eqref{syml} is equivalent to the condition
 \begin{eqnarray}
 [T(\alpha),T(\beta)]=T\big(\ad_{T(\alpha)}^*\beta-\ad_{T(\beta)}^*\alpha\big).\label{O1}
 \end{eqnarray}
 Furthermore, since $T$ is invertible, for all $x,y,z,w \in \g$ there exists $\alpha,\beta,\gamma,\delta \in \g^*$ such that
 \begin{eqnarray*}
 x=T(\alpha),~y=T(\beta),~z=T(\gamma),~w=T(\delta).
 \end{eqnarray*}
 Then condition \eqref{sym2} then becomes
 \begin{align}
\label{rela3}\nonumber\omega& (T(\gamma),\Courant{T(\alpha),T(\beta),T(\delta)})-\omega(T(\alpha),\Courant{T(\delta),T(\gamma),T(\beta)})\\
  &{}+\omega(T(\beta),\Courant{T(\delta),T(\gamma),T(\alpha)})-\omega(T(\delta),\Courant{T(\alpha),T(\beta),T(\gamma)})=0.
 \end{align}
 By \eqref{skew} and \eqref{invert}, we have
 \begin{eqnarray}
 ~ \label{rela1}\langle\gamma,\Courant{T(\alpha),T(\beta),T(\delta)}\rangle&=&-\langle(\mathfrak{L}(T(\alpha),T(\beta)))^*\gamma,T(\delta)\rangle
 =\langle\delta,T(\mathfrak{L}(T(\alpha),T(\beta)))^*)\gamma\rangle,\\
 ~ -\langle\alpha,\Courant{T(\delta),T(\gamma),T(\beta)}\rangle&=&\langle(\mathfrak{R}(T(\gamma),T(\beta)))^*\alpha,T(\delta)\rangle
 =-\langle\delta,T(\mathfrak{R}(T(\gamma),T(\beta)))^*)\alpha\rangle,\\
 ~ \label{rela2}\langle\beta,\Courant{T(\delta),T(\gamma),T(\alpha)}\rangle&=&-\langle(\mathfrak{R}(T(\gamma),T(\alpha)))^*\beta,T(\delta)\rangle
  =\langle\delta,T(\mathfrak{R}(T(\gamma),T(\alpha)))^*)\beta\rangle.
  \end{eqnarray}
  Substituting \eqref{rela1}-\eqref{rela2} into \eqref{rela3}, we obtain
  \begin{eqnarray*}
  \langle\delta,T\big((\mathfrak{L}(T(\alpha),T(\beta)))^*\gamma-(\mathfrak{R}(T(\gamma),T(\beta)))^*\alpha+(\mathfrak{R}(T(\gamma),T(\alpha)))^*\beta\big)
  -\Courant{T(\alpha),T(\beta),T(\gamma)}
  \rangle=0.
  \end{eqnarray*}
  Since $\delta$ is arbitrary, for all $\alpha,\beta,\gamma \in \g^*$, \eqref{sym2} is equivalent to
  \begin{eqnarray}
  \quad \Courant{T(\alpha),T(\beta),T(\gamma)}=T\big((\mathfrak{L}(T(\alpha),T(\beta)))^*\gamma-(\mathfrak{R}(T(\gamma),T(\beta)))^*\alpha+(\mathfrak{R}(T(\gamma),T(\alpha)))^*\beta\big).\label{O2}
  \end{eqnarray}
  Therefore, $\omega$ is a symplectic structure if and only if \eqref{O1} and \eqref{O2} holds, i.e. $T$ is an invertible relative Rota-Baxter operator with respect to the coadjoint representation.
 \end{proof}

 \begin{cor}\label{presy}
 Let $(\g,[\cdot,\cdot],\Courant{\cdot,\cdot,\cdot};\omega)$ be a symplectic Lie-Yamaguti algebra. Then there exists a compatible pre-Lie-Yamaguti algebra structure $(*,\{\cdot,\cdot,\cdot\})$ on $\g$ given by for all $x,y,z,w \in \g$,
 \begin{eqnarray}
 ~ \label{presy1}\omega(x*y,z)&=&-\omega(y,[x,z]),\\
 ~\label{presy3} \omega(\{x,y,z\},w)&=&\omega(x,\Courant{w,z,y}).
 \end{eqnarray}
 \end{cor}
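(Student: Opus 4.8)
The plan is to recognize this corollary as the confluence of the two constructions already in place: Theorem \ref{Rbsym} turns the symplectic form into a relative Rota-Baxter operator, and Theorem \ref{pre} (in its invertible incarnation, Proposition \ref{ppre}) turns that operator into a compatible pre-Lie-Yamaguti structure. Concretely, by Theorem \ref{Rbsym} the nondegenerate form $\omega$ produces an invertible skew-symmetric relative Rota-Baxter operator $T\colon\g^*\to\g$ with respect to the coadjoint representation $(\g^*;\ad^*,-\mathfrak{R}^*\tau)$, determined by $\omega(x,y)=\langle T^{-1}(x),y\rangle$. I would then feed this $T$ into Theorem \ref{pre} to obtain a pre-Lie-Yamaguti algebra on $\g^*$ and, since $T$ is invertible, transport it along $T$ exactly as in Proposition \ref{ppre} to get a compatible pre-Lie-Yamaguti algebra on $\g$ itself. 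It then remains only to check that the transported operations coincide with \eqref{presy1} and \eqref{presy3}.

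First I would write the transported structure explicitly. By Proposition \ref{ppre}, for $x,y,z\in\g$ the induced operations are $x*y=T\rho(x)T^{-1}(y)$ and $\{x,y,z\}=T\mu(y,z)T^{-1}(x)$, where here $\rho=\ad^*$ and $\mu=-\mathfrak{R}^*\tau$, so that $\mu(y,z)=-(\mathfrak{R}(z,y))^*$. Thus $x*y=T\ad_x^*T^{-1}(y)$ and $\{x,y,z\}=-T(\mathfrak{R}(z,y))^*T^{-1}(x)$. Pairing the first with $z$ through $\omega(a,b)=\langle T^{-1}(a),b\rangle$ and using the defining relation \eqref{pair1} for $\ad^*$ gives
\begin{eqnarray*}
\omega(x*y,z)=\langle\ad_x^*T^{-1}(y),z\rangle=-\langle T^{-1}(y),[x,z]\rangle=-\omega(y,[x,z]),
\end{eqnarray*}
which is exactly \eqref{presy1}.

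For the trilinear operation I would pair $\{x,y,z\}$ with $w$ in the same way:
\begin{eqnarray*}
\omega(\{x,y,z\},w)=-\langle(\mathfrak{R}(z,y))^*T^{-1}(x),w\rangle=\langle T^{-1}(x),\mathfrak{R}(z,y)w\rangle=\langle T^{-1}(x),\Courant{w,z,y}\rangle=\omega(x,\Courant{w,z,y}),
\end{eqnarray*}
using the dual convention \eqref{pair2} for $\mathfrak{R}^*$ and the identity $\mathfrak{R}(z,y)w=\Courant{w,z,y}$ from Example \ref{ad}; this is \eqref{presy3}. I expect no genuine difficulty: the corollary is essentially a translation of Theorems \ref{Rbsym} and \ref{pre}, and the only thing requiring care is the consistent tracking of the minus signs in the dual/coadjoint conventions \eqref{pair1}--\eqref{pair2} together with the swap $\tau$ hidden inside $\mu=-\mathfrak{R}^*\tau$, which is precisely where an off-by-sign slip would occur. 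Finally, nondegeneracy of $\omega$ (equivalently invertibility of $T$) guarantees that $*$ and $\{\cdot,\cdot,\cdot\}$ are well defined on all of $\g$, so the compatible pre-Lie-Yamaguti structure covers the whole algebra and $\omega$ determines it uniquely through \eqref{presy1} and \eqref{presy3}.
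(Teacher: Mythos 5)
Your proposal is correct and follows essentially the same route as the paper: invoke Theorem \ref{Rbsym} to get the invertible skew-symmetric relative Rota-Baxter operator $T$ with respect to the coadjoint representation, apply Proposition \ref{ppre} to obtain $x*y=T(\ad_x^*T^{-1}(y))$ and $\{x,y,z\}=T((-\mathfrak{R}(z,y))^*T^{-1}(x))$, and then verify \eqref{presy1} and \eqref{presy3} by the same sign-tracking computation with the dual pairing. The paper's proof is identical in substance.
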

 \begin{proof}
 By Proposition \ref{ppre}, there exists a compatible pre-Lie-Yamaguti algebra structure on $\g$ given by for all $x,y,z \in \g$
 \begin{eqnarray*}
 x*y=T(\ad_x^*T^{-1}(y)),\quad \{x,y,z\}=T((-\mathfrak{R}(z,y))^*T^{-1}(x)),
 \end{eqnarray*}
 where $(\g^*;\ad^*,-\mathfrak{R}^*{\tau})$ is the coadjoint representation, and $T$ is given by \eqref{symT}.
 Then  we have
 \begin{eqnarray*}
 ~ \omega(x*y,z)&=&\omega(T(\ad_x^*T^{-1}(y)),z)=\langle\ad_x^*T^{-1}(y),z\rangle\\
 ~&=&-\langle T^{-1}(y),[x,z]\rangle=-\omega(y,[x,z]),\\
 ~ \omega(\{x,y,z\},w)&=&\omega(T((-\mathfrak{R}(z,y))^*T^{-1}(x)),w)=-\langle(\mathfrak{R}(z,y))^*T^{-1}(x),w\rangle\\
 ~ &=&\langle T^{-1}(x),\Courant{w,z,y}\rangle=\omega(x,\Courant{w,z,y}).
 \end{eqnarray*}
 This completes the proof.
 \end{proof}

 \begin{cor}
 Let $(\g,[\cdot,\cdot],\Courant{\cdot,\cdot,\cdot};\omega)$ be a symplectic Lie-Yamaguti algebra and $(*,\{\cdot,\cdot,\cdot\})$ be the compatible pre-Lie-Yamaguti algebra structure determined by \eqref{presy1} and \eqref{presy3}. Then we have
 \begin{eqnarray}
 ~ \omega(\{x,y,z\}_D,w)&=&-\omega(z,\Courant{x,y,w}).\label{left1}
 \end{eqnarray}
 \end{cor}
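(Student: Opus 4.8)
The plan is to prove the identity \eqref{left1} by unwinding the definition \eqref{pre3} of the operator $\{\cdot,\cdot,\cdot\}_D$ in terms of the basic operations $*$ and $\{\cdot,\cdot,\cdot\}$, and then pairing each resulting term against $w$ via $\omega$ using the two defining relations \eqref{presy1} and \eqref{presy3} from the previous corollary. Concretely, recall that
\begin{eqnarray*}
\{x,y,z\}_D=\{z,y,x\}-\{z,x,y\}+(y,x,z)-(x,y,z),
\end{eqnarray*}
where $(a,b,c)=(a*b)*c-a*(b*c)$ is the associator. So the left-hand side $\omega(\{x,y,z\}_D,w)$ expands into six terms: two coming from the ternary brackets $\{z,y,x\}$ and $\{z,x,y\}$, and four coming from the two associators. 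Since $\omega$ is bilinear, I would evaluate $\omega(\cdot,w)$ on each piece separately.

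First I would handle the two ternary terms. By \eqref{presy3}, $\omega(\{z,y,x\},w)=\omega(z,\Courant{w,x,y})$ and $\omega(\{z,x,y\},w)=\omega(z,\Courant{w,y,x})$, so their difference contributes $\omega(z,\Courant{w,x,y}-\Courant{w,y,x})=\omega(z,\Courant{w,x,y})-\omega(z,\Courant{w,y,x})$. Next I would treat the four associator terms. Each term is of the form $\omega((a*b)*c,w)$ or $\omega(a*(b*c),w)$, which by \eqref{presy1} equals $-\omega(c,[a*b\text{ or }a,\ \cdot\,])$-type expressions; repeatedly applying \eqref{presy1} converts each $*$ into a commutator bracket $[\cdot,\cdot]$ paired inside $\omega$. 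The goal of this bookkeeping is to rewrite everything in terms of $\omega(z,\cdot)$ and then recognize that the net effect of the ternary contribution plus the associator contribution collapses, via the defining relation \eqref{subLY2} of $\Courant{\cdot,\cdot,\cdot}_C$ (namely $\Courant{x,y,z}_C=\{x,y,z\}_D+\{x,y,z\}-\{y,x,z\}$) together with the invariance identities for $\omega$, precisely to $-\omega(z,\Courant{x,y,w})$.

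The cleanest route, which I would take to avoid grinding through all six terms by hand, is to observe that the compatible pre-Lie-Yamaguti structure arises from the skew-symmetric relative Rota-Baxter operator $T$ of Theorem \ref{Rbsym}, with $\omega(x,y)=\langle T^{-1}(x),y\rangle$ and, by \eqref{left} applied in the subadjacent algebra together with \eqref{preO}, $\{x,y,z\}_D=\huaL(x,y)z=T D_{\rho,\mu}(T^{-1}x,T^{-1}y)T^{-1}z$ with $\rho=\ad^*$ and $\mu=-\mathfrak{R}^*\tau$. Writing $x=T(\alpha),y=T(\beta),z=T(\gamma)$ and using $\{x,y,z\}_D=T(\mathfrak{L}(x,y))^*T^{-1}(z)$ exactly as in the displayed commented computation following the main theorem of Section \ref{sec:rep}, I would compute
\begin{eqnarray*}
\omega(\{x,y,z\}_D,w)=\langle T^{-1}\{x,y,z\}_D,w\rangle=\langle(\mathfrak{L}(x,y))^*T^{-1}(z),w\rangle=\langle T^{-1}(z),\mathfrak{L}(x,y)w\rangle,
\end{eqnarray*}
and since $\mathfrak{L}(x,y)w=\Courant{x,y,w}$ by \eqref{lef}, this equals $\langle T^{-1}(z),\Courant{x,y,w}\rangle$. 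The one subtlety is the sign: by skew-symmetry \eqref{skew} of $T$ one has $\langle T^{-1}(z),\Courant{x,y,w}\rangle=-\omega(z,\Courant{x,y,w})$ once I verify that $\omega(a,b)=\langle T^{-1}(a),b\rangle$ is indeed skew in $a,b$, which follows from \eqref{skew} with $\alpha=T^{-1}(a),\beta=T^{-1}(b)$. This yields \eqref{left1} directly.

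The main obstacle I anticipate is purely one of sign-and-symmetry bookkeeping rather than any real difficulty: making sure the pairing $\omega(a,b)=\langle T^{-1}(a),b\rangle$ is correctly identified as skew-symmetric from \eqref{skew}, and that the coadjoint-representation conventions $\mu=-\mathfrak{R}^*\tau$ feed the correct sign into $\mathfrak{L}^*$ so that $\mathfrak{L}(x,y)w=\Courant{x,y,w}$ is applied with the orientation matching \eqref{presy3} and \eqref{invr3}. If one instead expands the six terms via \eqref{pre3} directly, the obstacle becomes organizing the repeated use of \eqref{presy1} on the associators without sign errors; the operator-theoretic route above sidesteps this and is the approach I would recommend.
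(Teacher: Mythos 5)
Your second, operator-theoretic route is precisely the paper's own proof: the paper also writes $\{x,y,z\}_D=T\bigl((\mathfrak{L}(x,y))^*T^{-1}(z)\bigr)$ and computes $\omega(\{x,y,z\}_D,w)=\langle(\mathfrak{L}(x,y))^*T^{-1}(z),w\rangle=-\langle T^{-1}(z),\Courant{x,y,w}\rangle=-\omega(z,\Courant{x,y,w})$. So the approach and the conclusion are both right. However, your sign bookkeeping in the middle contains two errors that happen to cancel, and since you single out the sign as ``the one subtlety,'' this needs to be said. First, the step
\begin{eqnarray*}
\langle(\mathfrak{L}(x,y))^*T^{-1}(z),w\rangle=\langle T^{-1}(z),\mathfrak{L}(x,y)w\rangle
\end{eqnarray*}
is wrong under the paper's conventions: by \eqref{pair3} the dual of $D_{\rho,\mu}(x,y)$, hence of $\mathfrak{L}(x,y)$, is defined with a minus sign, so this pairing equals $-\langle T^{-1}(z),\mathfrak{L}(x,y)w\rangle=-\langle T^{-1}(z),\Courant{x,y,w}\rangle$ by \eqref{lef}. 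Second, your final identification $\langle T^{-1}(z),\Courant{x,y,w}\rangle=-\omega(z,\Courant{x,y,w})$ is also wrong: by the very definition \eqref{symT} one has $\omega(a,b)=\langle T^{-1}(a),b\rangle$, so $\langle T^{-1}(z),\Courant{x,y,w}\rangle=+\omega(z,\Courant{x,y,w})$ with no extra sign. The skew-symmetry \eqref{skew} of $T$ is what guarantees $\omega(a,b)=-\omega(b,a)$, i.e.\ that $\omega$ is a two-form; it plays no role in translating $\langle T^{-1}(z),\cdot\,\rangle$ back into $\omega(z,\cdot)$, because the first argument is the one hit by $T^{-1}$ in \eqref{symT}. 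Correcting both steps yields $\omega(\{x,y,z\}_D,w)=-\langle T^{-1}(z),\Courant{x,y,w}\rangle=-\omega(z,\Courant{x,y,w})$, which is \eqref{left1}.

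Your fallback plan of expanding the six terms of \eqref{pre3} against $w$ via \eqref{presy1} and \eqref{presy3} would also succeed; it is essentially the computation the paper carries out later for the analogous identity \eqref{inv2} in a quadratic pre-Lie-Yamaguti algebra, where after converting each associator term into nested commutators one must invoke \eqref{subLY1} and finally the Lie-Yamaguti identity \eqref{LY1} to collapse the sum. But for the present corollary that grind is unnecessary, and the operator route--with the two signs fixed as above--is the intended argument.
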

\begin{proof}
 Indeed, since $\{x,y,z\}_D=T((\mathfrak{L}(x,y))^*T^{-1}(z)),$ then
 \begin{eqnarray*}
 ~ \omega(\{x,y,z\}_D,w)&=&\omega(T((\mathfrak{L}(x,y))^*T^{-1}(z)),w)=\langle(\mathfrak{L}(x,y))^*T^{-1}(z),w\rangle\\
 ~ &=&-\langle T^{-1}(z),\Courant{x,y,w}\rangle=-\omega(z,\Courant{x,y,w}).
 \end{eqnarray*}
 This completes the proof.
 \end{proof}
Let $V$ be a vector space and $V^*$ its dual space. Then there is a natural nondegenerate skew-symmetric bilinear form $\omega_p$ on $T^*V=V\oplus V^*$ given by
\begin{eqnarray}
\label{ssym}\omega_p(x+\alpha,y+\beta)=\langle\alpha,y\rangle-\langle\beta,x\rangle,\quad \forall x,y \in V,~\alpha,\beta \in V^*.
\end{eqnarray}
Now let us introduce the notion of phase spaces of Lie-Yamaguti algebras.
\begin{defi}
Let $(\h,[\cdot,\cdot]_{\h},\Courant{\cdot,\cdot,\cdot}_{\h})$ be a Lie-Yamaguti algebra and $\h^*$ its dual.
\begin{itemize}
\item [$\bullet$]If there is a Lie-Yamaguti algebra structure $([\cdot,\cdot],\Courant{\cdot,\cdot,\cdot})$ on the direct sum vector space $T^*\h=\h\oplus \h^*$ such that $(\h\oplus\h^*,[\cdot,\cdot],\Courant{\cdot,\cdot,\cdot};\omega_p)$ is a symplectic Lie-Yamaguti algebra, where $\omega_p$ is given by \eqref{ssym}, and $(\h,[\cdot,\cdot]_{\h},\Courant{\cdot,\cdot,\cdot}_{\h})$ and $(\h^*,[\cdot,\cdot]|_{\h^*},\Courant{\cdot,\cdot,\cdot}|_{\h^*})$ are Lie-Yamaguti subalgebras of $(\h\oplus\h^*,[\cdot,\cdot],\Courant{\cdot,\cdot,\cdot})$, then the symplectic Lie-Yamaguti algebra $(\h\oplus\h^*,[\cdot,\cdot],\Courant{\cdot,\cdot,\cdot};\omega_p)$ is called a {\bf phase space} of the Lie-Yamaguti algebra $(\h,[\cdot,\cdot]_{\h},\Courant{\cdot,\cdot,\cdot}_{\h})$.
\item [$\bullet$]A phase space $(\h\oplus\h^*,[\cdot,\cdot],\Courant{\cdot,\cdot,\cdot};\omega_p)$ is called {\bf perfect} if for all $x,y \in \h,~\alpha,\beta \in \h^*$,
\begin{eqnarray*}
\Courant{\alpha,\beta,x}\in \h,~\Courant{x,\alpha,\beta} \in \h,\quad \Courant{x,y,\alpha}\in \h^*,~\Courant{\alpha,x,y} \in \h^*.
\end{eqnarray*}
\end{itemize}
\end{defi}

\begin{thm}\label{phase}
A Lie-Yamaguti algebra has a phase space if and only if it is sub-adjacent to a compatible pre-Lie-Yamaguti algebra.
\end{thm}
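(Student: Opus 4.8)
The plan is to prove both implications of Theorem \ref{phase} by exhibiting explicit symplectic structures and pre-Lie-Yamaguti algebras, using the machinery already developed. The key observation is that the natural bilinear form $\omega_p$ on $T^*\h=\h\oplus\h^*$ given by \eqref{ssym} is precisely the form attached to a skew-symmetric invertible operator, so Theorem \ref{Rbsym} and Corollary \ref{presy} become the main engines.

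For the ``only if'' direction, I would start from a phase space $(\h\oplus\h^*,[\cdot,\cdot],\Courant{\cdot,\cdot,\cdot};\omega_p)$. Since $\omega_p$ is a symplectic structure, Corollary \ref{presy} immediately equips the whole space $\h\oplus\h^*$ with a compatible pre-Lie-Yamaguti algebra structure via \eqref{presy1} and \eqref{presy3}. The point is then to check that the underlying Lie-Yamaguti algebra $\h\oplus\h^*$ with this structure is exactly the sub-adjacent algebra $(\h\oplus\h^*)^c$ of that pre-Lie-Yamaguti algebra, which is guaranteed by the compatibility assertion in Proposition \ref{ppre} and Theorem \ref{subad}(i). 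So a Lie-Yamaguti algebra admitting a phase space is sub-adjacent to a compatible pre-Lie-Yamaguti algebra, as desired.

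For the ``if'' direction, suppose $\h$ is the sub-adjacent Lie-Yamaguti algebra $A^c$ of a pre-Lie-Yamaguti algebra $(A,*,\{\cdot,\cdot,\cdot\})$ with $A=\h$. By Theorem \ref{subad}(ii), $(A;L,\huaR)$ is a representation of $A^c$ on itself, so by Proposition \ref{dual} the dual $(A^*;L^*,-\huaR^*\tau)$ is a representation and one can form the semidirect product Lie-Yamaguti algebra $A^c\ltimes_{L^*,-\huaR^*\tau}A^*$ on $\h\oplus\h^*$, as already anticipated in the remark following Theorem \ref{subad}. The plan is to show that $\omega_p$ is a symplectic structure on this semidirect product. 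Concretely I would verify the two symplectic identities \eqref{syml} and \eqref{sym2} for $\omega_p$ by unwinding the semidirect product formulas \eqref{semi1}, \eqref{semi2} together with the definitions \eqref{pair1}-\eqref{pair3} of the dual maps and the defining pre-Lie-Yamaguti identities; the key inputs will be \eqref{left} relating $\huaL$ to $\{\cdot,\cdot,\cdot\}_D$ and the invariance-type relations that reduce each term to a pairing between $\h$ and $\h^*$. Since $\h$ and $\h^*$ are clearly Lie-Yamaguti subalgebras of the semidirect product (the brackets of two elements of $\h$ land in $\h$, and those of two elements of $\h^*$ land in $\h^*$ because $\rho,\mu$ act trivially on the $\h^*$ factor in the relevant slots), this produces a genuine phase space.

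The main obstacle I anticipate is the bookkeeping in the ``if'' direction: verifying \eqref{sym2} for $\omega_p$ on the semidirect product requires expanding $\Courant{\cdot,\cdot,\cdot}_{L^*,-\huaR^*\tau}$ on mixed elements $x+\alpha$ and carefully matching the four cyclic-type terms, each of which pairs a pre-Lie operation against its dual. The risk is that signs and the switch $\tau$ in $-\huaR^*\tau$ must be tracked precisely, and the pre-Lie-Yamaguti axioms \eqref{pre2}-\eqref{pre7} enter only after one rewrites everything in terms of $*$, $\{\cdot,\cdot,\cdot\}$ and $\{\cdot,\cdot,\cdot\}_D$. I expect that the identity \eqref{presy3} derived in Corollary \ref{presy}, read backwards, is essentially the content one must reproduce, so the cleanest route may be to observe that the pre-Lie-Yamaguti structure defined by \eqref{presy1}-\eqref{presy3} from $\omega_p$ coincides with the given one on $\h$ and is compatible on $\h^*$, thereby closing the loop without a term-by-term computation.
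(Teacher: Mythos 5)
Your ``only if'' direction has a genuine gap: after applying Corollary \ref{presy} to the symplectic Lie-Yamaguti algebra $(\h\oplus\h^*,[\cdot,\cdot],\Courant{\cdot,\cdot,\cdot};\omega_p)$ you obtain a compatible pre-Lie-Yamaguti structure on the \emph{whole} space $T^*\h=\h\oplus\h^*$, and what you conclude is that $\h\oplus\h^*$ is sub-adjacent to a compatible pre-Lie-Yamaguti algebra. But the theorem asserts this for the original algebra $\h$, not for its phase space. The missing step --- and the actual content of this direction in the paper --- is to show that the operations $*$ and $\{\cdot,\cdot,\cdot\}$ defined on $T^*\h$ by \eqref{presy1} and \eqref{presy3} restrict to $\h$. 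This uses both hypotheses of the phase-space definition at once: since $\h$ is a Lie-Yamaguti subalgebra, for $x,y,z,w\in\h$ one has $[x,z]=[x,z]_{\h}\in\h$ and $\Courant{w,z,y}=\Courant{w,z,y}_{\h}\in\h$, and since $\h$ is isotropic for $\omega_p$ this gives $\omega_p(x*y,z)=-\omega_p(y,[x,z]_{\h})=0$ and $\omega_p(\{x,y,z\},w)=\omega_p(x,\Courant{w,z,y}_{\h})=0$ for all $z,w\in\h$; as $\h$ is Lagrangian (its $\omega_p$-orthogonal is itself), this forces $x*y\in\h$ and $\{x,y,z\}\in\h$. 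Only then is $(\h,*|_{\h},\{\cdot,\cdot,\cdot\}|_{\h})$ a pre-Lie-Yamaguti algebra whose sub-adjacent Lie-Yamaguti algebra is the original $(\h,[\cdot,\cdot]_{\h},\Courant{\cdot,\cdot,\cdot}_{\h})$. Without this restriction argument the implication you need is not established.

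Your ``if'' direction follows the paper's route (semidirect product $A^c\ltimes_{L^*,-\huaR^*\tau}A^*$, then verify \eqref{syml} and \eqref{sym2} for $\omega_p$ by direct expansion), and that computation does go through. However, the shortcut you float at the end --- invoking \eqref{presy1}--\eqref{presy3} to ``close the loop without a term-by-term computation'' --- is circular: Corollary \ref{presy} presupposes that $\omega_p$ is already a symplectic structure on the algebra in question, which is exactly what you are trying to prove at that point. The term-by-term verification (or an equivalent appeal to Theorem \ref{Rbsym} after identifying the relevant relative Rota-Baxter operator) cannot be bypassed this way.
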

\begin{proof}
Let $(A,*,\{\cdot,\cdot,\cdot\})$ be a pre-Lie-Yamaguti algebra. By Theorem \ref{subad}, $(A;L,\huaR)$ is a representation of the sub-adjacent Lie-Yamaguti algebra $A^c$ on $A$. By Proposition \ref{dual}, $(A^*;L^*,-\huaR^*\tau)$ is a representation of the sub-adjacent Lie-Yamaguti algebra $A^c$ on $A^*$. Thus we have the semidirect product Lie-Yamaguti algebra
\begin{eqnarray*}
A^c\ltimes_{L^*,-\huaR^*\tau}A^*=(A^c\oplus A^*,[\cdot,\cdot]_{L^*,-\huaR^*\tau},\Courant{\cdot,\cdot,\cdot}_{L^*,-\huaR^*\tau}).
 \end{eqnarray*}
 Then $(A^c\ltimes_{L^*,-\huaR^*\tau}A^*,\omega_p)$ is a symplectic Lie-Yamaguti algebra, which is a phase space of the sub-adjacent Lie-Yamaguti algebra $(A^c,[\cdot,\cdot]_C,\Courant{\cdot,\cdot,\cdot}_C)$. In fact, for all $x,y,z,w \in A$ and $\alpha,\beta,\gamma,\delta \in A^*$, we have
\begin{align*}
\omega_p& (z+\gamma,\Courant{x+\alpha,y+\beta,w+\delta}_{L^*,-\huaR^*\tau})\\
& {}\stackrel{\eqref{semi2}}{=}\omega_p(z+\gamma,\Courant{x,y,w}_C+\huaL^*(x,y)\delta-\huaR^*(w,y)\alpha+\huaR^*(w,x)\beta)\\
& {}\stackrel{\eqref{ssym}}{=}\pair{\gamma,\Courant{x,y,w}_C}-\pair{\huaL^*(x,y)\delta-\huaR^*(w,y)\alpha+\huaR^*(w,x)\beta,z}\\
&{}\stackrel{\eqref{subLY2},\eqref{pair2}}{=}\langle\gamma,\{x,y,w\}_D\rangle+\langle\gamma,\{x,y,w\}\rangle-\langle\gamma,\{y,x,w\}\rangle\\
{}&\quad +\langle\delta,\{x,y,z\}_D\rangle-\langle\alpha,\{z,w,y\}\rangle+\langle\beta,\{z,w,x\}\rangle,
\end{align*}
where $\huaL$ is given by \eqref{left}. Similarly, we have
\begin{align*}
 -\omega_p& (x+\alpha,\Courant{w+\delta,z+\gamma,y+\beta}_{L^*,-\huaR^*\tau})\\
 &{}=-\omega_p(x+\alpha,\Courant{w,z,y}_C+\huaL^*(w,z)\beta-\huaR^*(y,z)\delta+\huaR^*(y,w)\gamma)\\
 &{}=-\langle\beta,\{w,z,x\}_D\rangle+\langle\delta,\{x,y,z\}\rangle-\langle\gamma,\{x,y,w\}\rangle\\
 &{}\quad -\langle\alpha,\{w,z,y\}_D\rangle-\langle\alpha,\{w,z,y\}\rangle+\langle\alpha,\{z,w,y\}\rangle,\\
 \omega_p& (y+\beta,\Courant{w+\delta,z+\gamma,x+\alpha}_{L^*,-\huaR^*\tau})\\
 &{}= \omega_p(y+\beta,\Courant{w,z,x}_C+\huaL^*(w,z)\alpha-\huaR^*(x,z)\delta+\huaR^*(x,w)\gamma)\\
&{}=\langle\beta,\{w,z,x\}_D\rangle+\langle\beta,\{w,z,x\}-\langle\beta,\{z,w,x\}\rangle\\
& {}\quad +\langle\alpha,\{w,z,y\}_D\rangle-\langle\delta,\{y,x,z\}\rangle+\langle\gamma,\{y,x,w\}\rangle,\\
 -\omega_p& (w+\delta,\Courant{x+\alpha,y+\beta,z+\gamma}_{L^*,-\huaR^*\tau})\\
 &{}=-\omega_p(w+\delta,\Courant{x,y,z}_C+\huaL^*(x,y)\gamma-\huaR^*(z,y)\alpha+\huaR^*(z,x)\beta)\\
 &{}=-\langle\delta,\{x,y,z\}_D\rangle-\langle\delta,\{x,y,z\}+\langle\delta,\{y,x,z\}\rangle\\
 &{}\quad -\langle\gamma,\{x,y,w\}_D\rangle+\langle\alpha,\{w,z,y\}\rangle-\langle\beta,\{w,z,x\}\rangle.
\end{align*}
Thus we obtain that
\begin{align} \label{Sym1}\omega_p& (z+\gamma,\Courant{x+\alpha,y+\beta,w+\delta}_{L^*,-\huaR^*\tau})-\omega_p(x+\alpha,\Courant{w+\delta,z+\gamma,y+\beta}_{L^*,-\huaR^*\tau})\\
 \nonumber& {}+\omega_p(y+\beta,\Courant{w+\delta,z+\gamma,x+\alpha}_{L^*,-\huaR^*\tau})-\omega_p(w+\delta,\Courant{x+\alpha,y+\beta,z+\gamma}_{L^*,-\huaR^*\tau})=0.
\end{align}
Moreover, we have
\begin{align*}
\omega_p& (x+\alpha,[y+\beta,z+\gamma]_{L^*,-\huaR^*\tau})=\omega_p(x+\alpha,[y,z]_C+L^*_y\gamma-L^*_z\beta)\\
~ &{}=\langle\alpha,[y,z]_C\rangle-\langle L^*_y\gamma,x\rangle+\langle L^*_z\beta,x\rangle=\langle\alpha,[y,z]_C\rangle+\langle \gamma,y*x\rangle-\langle \beta,z*x\rangle,\\
\omega_p& (y+\beta,[z+\gamma,x+\alpha]_{L^*,-\huaR^*\tau})=\omega_p(y+\beta,[z,x]_C+L^*_z\alpha-L^*_x\gamma)\\
&{}= \langle\beta,[z,x]_C\rangle-\langle L^*_z\alpha,y\rangle+\langle L^*_x\gamma,y\rangle=\langle\beta,[z,x]_C\rangle+\langle \alpha,z*y\rangle-\langle \gamma,x*y\rangle,\\
\omega_p& (z+\gamma,[x+\alpha,y+\beta]_{L^*,-\huaR^*\tau})=\omega_p(z+\gamma,[x,y]_C+L^*_x\beta-L^*_y\alpha)\\
&{}=\langle\gamma,[x,y]_C\rangle-\langle L^*_x\beta,z\rangle+\langle L^*_y\alpha,z\rangle=\langle\gamma,[x,y]_C\rangle+\langle \beta,x*z\rangle-\langle \alpha,y*z\rangle.
\end{align*}
Thus, we obtain that
\begin{eqnarray}
\label{Sym2}\omega_p(x+\alpha,[y+\beta,z+\gamma]_{L^*,-\huaR^*\tau})+c.p.=0,
 \end{eqnarray}
  By \eqref{Sym1} and \eqref{Sym2}, $\omega_p$ is a symplectic structure on the semidirect product Lie-Yamaguti algebra $A^c\ltimes_{L^*,-\huaR^*\tau}A^*$. Moreover, $(A^c,[\cdot,\cdot]_C,\Courant{\cdot,\cdot,\cdot}_C)$ is a subalgebra of $A^c\ltimes_{L^*,-\huaR^*\tau}A^*$. Thus, the symplectic Lie-Yamaguti algebra $(A^c\ltimes_{L^*,-\huaR^*\tau}A^*;\omega_p)$ is a phase space of the sub-adjacent Lie-Yamaguti algebra $(A^c,[\cdot,\cdot]_C,\Courant{\cdot,\cdot,\cdot}_C)$.

Conversely, let $(T^*\h=\h\oplus\h^*,[\cdot,\cdot],\Courant{\cdot,\cdot,\cdot};\omega_p)$ be a phase space of a Lie-Yamaguti algebra $(\h,[\cdot,\cdot]_{\h},\Courant{\cdot,\cdot,\cdot}_{\h})$. By Corollary \ref{presy}, there exists a compatible pre-Lie-Yamaguti algebra structure $(*,\{\cdot,\cdot,\cdot\})$ on $T^*\h$ give by \eqref{presy1}-\eqref{presy3}. Since $(\h,[\cdot,\cdot]_{\h},\Courant{\cdot,\cdot,\cdot}_{\h})$ is a subalgebra of $(T^*\h=\h\oplus\h^*,[\cdot,\cdot],\Courant{\cdot,\cdot,\cdot})$, we have
\begin{eqnarray*}
~ \omega_p(x*y,z)&=&-\omega_p(y,[x,z])=-\omega_p(y,[x,z]_{\h})=0,\\
~ \omega_p(\{x,y,z\},w)&=&\omega_p(x,\Courant{w,z,y})=\omega_p(x,\Courant{w,z,y}_{\h})=0, \quad \forall x,y,z,w \in \h.
\end{eqnarray*}
Thus $x*y,\{x,y,z\} \in \h$, which implies that $(\h,*|_{\h},\{\cdot,\cdot,\cdot\}|_{\h})$ is a subalgebra of the pre-Lie-Yamaguti algebra $(T^*\h,*,\{\cdot,\cdot,\cdot\})$, whose sub-adjacent Lie-Yamaguti algebra $\h^c$ is exactly the original Lie-Yamaguti algebra $(\h,[\cdot,\cdot]_{\h},\Courant{\cdot,\cdot,\cdot}_{\h})$.
\end{proof}
By the proof of the above theorem, we immediately get the following result.
\begin{cor}\label{cor}
Let $(T^*\h=\h\oplus\h^*,[\cdot,\cdot],\Courant{\cdot,\cdot,\cdot};\omega_p)$ be a phase space of a Lie-Yamaguti algebra $(\h,[\cdot,\cdot]_{\h},\Courant{\cdot,\cdot,\cdot}_{\h})$ and $(\h\oplus\h^*,*,\{\cdot,\cdot,\cdot\})$ be its compatible pre-Lie-Yamaguti algebra. Then both $(\h,*|_{\h},\{\cdot,\cdot,\cdot\}|_{\h})$ and $(\h^*,*|_{\h^*},\{\cdot,\cdot,\cdot\}|_{\h^*})$ are subalgebras of the pre-Lie-Yamaguti algebra $(\h\oplus\h^*,*,\{\cdot,\cdot,\cdot\})$.
\end{cor}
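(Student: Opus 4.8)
The plan is to observe that the statement for $\h$ is already contained verbatim in the proof of Theorem \ref{phase}, and then to obtain the statement for $\h^*$ by running the identical argument with the roles of $\h$ and $\h^*$ interchanged. Concretely, the compatible pre-Lie-Yamaguti structure on $T^*\h$ is characterized by \eqref{presy1} and \eqref{presy3}, and in the proof of Theorem \ref{phase} one sees that $\h$ is closed under $*$ and $\{\cdot,\cdot,\cdot\}$ precisely because $\omega_p$ vanishes on $\h\times\h$ (immediate from \eqref{ssym}, since the $\h^*$-components of elements of $\h$ are zero) together with the hypothesis that $\h$ is a Lie-Yamaguti subalgebra. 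This yields the first half of the corollary with no extra work.

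For the second half I would exploit that, by the very definition of a phase space, $\h^*$ is also a Lie-Yamaguti subalgebra of $(T^*\h,[\cdot,\cdot],\Courant{\cdot,\cdot,\cdot})$, and that $\omega_p$ restricted to $\h^*\times\h^*$ is identically zero (again by \eqref{ssym}, as elements of $\h^*$ have vanishing $\h$-component). Then for $\alpha,\beta,\gamma\in\h^*$, equation \eqref{presy1} gives $\omega_p(\alpha*\beta,\gamma)=-\omega_p(\beta,[\alpha,\gamma])$; since $[\alpha,\gamma]\in\h^*$ and both arguments now lie in $\h^*$, the right-hand side vanishes. Writing $\alpha*\beta=a+\phi$ with $a\in\h$ and $\phi\in\h^*$, the identity $\omega_p(\alpha*\beta,\gamma)=-\langle\gamma,a\rangle=0$ for every $\gamma\in\h^*$ forces $a=0$ by nondegeneracy of the pairing between $\h$ and $\h^*$, so $\alpha*\beta\in\h^*$. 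The same computation with \eqref{presy3}, using that $\Courant{\delta,\gamma,\beta}\in\h^*$ for $\beta,\gamma,\delta\in\h^*$, gives $\omega_p(\{\alpha,\beta,\gamma\},\delta)=\omega_p(\alpha,\Courant{\delta,\gamma,\beta})=0$ for all $\delta\in\h^*$, whence $\{\alpha,\beta,\gamma\}\in\h^*$. Thus $\h^*$ is closed under both operations.

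The only point that requires any care — and it is hardly an obstacle — is the nondegeneracy step: one must read off from $\omega_p(\alpha*\beta,\gamma)=0$ (for all $\gamma\in\h^*$) that it is the $\h$-component of $\alpha*\beta$ that vanishes, which is exactly where the explicit skew pairing form of $\omega_p$ in \eqref{ssym} and the duality between $\h$ and $\h^*$ enter. Everything else is a direct transcription of the computation already carried out for $\h$ in Theorem \ref{phase}, so the proof reduces to citing that theorem and noting this symmetric repetition.
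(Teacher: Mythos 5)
Your proposal is correct and is exactly the argument the paper intends: the paper derives the corollary ``by the proof of the above theorem,'' i.e.\ by the computation $\omega_p(x*y,z)=-\omega_p(y,[x,z]_{\h})=0$ together with the analogous computation on $\h^*$, using that $\h$ and $\h^*$ are isotropic for $\omega_p$ and are Lie-Yamaguti subalgebras, and then reading off from the nondegenerate pairing that the complementary component vanishes. Your write-up just makes the $\h^*$ half and the nondegeneracy step explicit, which is a faithful transcription of the same proof.
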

\begin{cor}
If $(T^*\h=\h\oplus\h^*,[\cdot,\cdot],\Courant{\cdot,\cdot,\cdot};\omega_p)$ is a phase space of a Lie-Yamaguti algebra $(\h,[\cdot,\cdot]_{\h},\Courant{\cdot,\cdot,\cdot}_{\h})$ such that $(T^*\h=\h\oplus\h^*,[\cdot,\cdot],\Courant{\cdot,\cdot,\cdot})$ is the semidirect product $\h\ltimes_{\rho^*,-\mu^*\tau}\h^*$, where $(\rho,\mu)$ is a representation of $(\h,[\cdot,\cdot]_{\h},\Courant{\cdot,\cdot,\cdot}_{\h})$ on $\h$ and $(\rho^*,-\mu^*\tau)$ is its dual representation, then
\begin{eqnarray*}
~ \label{co1}x*y&=&\rho(x)y,\\
~\label{co3} \{x,y,z\}&=&\mu(y,z)x,\quad \forall x,y,z \in \h,
\end{eqnarray*}
defines a pre-Lie-Yamaguti algebra on $\h$.
\end{cor}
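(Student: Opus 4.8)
The plan is to recognize the two claimed operations as nothing but the restriction to $\h$ of the canonical pre-Lie-Yamaguti structure carried by the phase space. Since $(\h\oplus\h^*,[\cdot,\cdot],\Courant{\cdot,\cdot,\cdot};\omega_p)$ is by hypothesis a symplectic Lie-Yamaguti algebra, Corollary \ref{presy} furnishes a compatible pre-Lie-Yamaguti structure $(*,\{\cdot,\cdot,\cdot\})$ on $T^*\h$ characterized by \eqref{presy1} and \eqref{presy3}, and Corollary \ref{cor} then guarantees that $(\h,*|_\h,\{\cdot,\cdot,\cdot\}|_\h)$ is a pre-Lie-Yamaguti subalgebra. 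Hence the entire structural content is already in place, and it only remains to identify the restricted operations explicitly with $\rho(x)y$ and $\mu(y,z)x$.

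First I would compute $x*y$ for $x,y\in\h$. Writing $x*y=p+\sigma$ with $p\in\h$, $\sigma\in\h^*$ and pairing against an arbitrary $z+\gamma\in T^*\h$, the defining relation \eqref{presy1} reads $\langle\sigma,z\rangle-\langle\gamma,p\rangle=-\omega_p(y,[x,z+\gamma])$. The semidirect product bracket \eqref{semi1} gives $[x,z+\gamma]=[x,z]_\h+\rho^*(x)\gamma$, so by \eqref{ssym} and the duality \eqref{pair1} the right-hand side becomes $\langle\rho^*(x)\gamma,y\rangle=-\langle\gamma,\rho(x)y\rangle$. Comparing coefficients (first $\gamma=0$, then $z=0$) forces $\sigma=0$ and $p=\rho(x)y$, whence $x*y=\rho(x)y$. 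Next I would treat $\{x,y,z\}$ in the same manner: writing $\{x,y,z\}=p+\sigma$ and pairing with $w+\delta$, relation \eqref{presy3} becomes $\langle\sigma,w\rangle-\langle\delta,p\rangle=\omega_p(x,\Courant{w+\delta,z,y})$. The semidirect product trilinear bracket \eqref{semi2} yields $\Courant{w+\delta,z,y}=\Courant{w,z,y}_\h+(-\mu^*\tau)(z,y)\delta=\Courant{w,z,y}_\h-\mu^*(y,z)\delta$, so by \eqref{ssym} and \eqref{pair2} the right-hand side equals $\langle\mu^*(y,z)\delta,x\rangle=-\langle\delta,\mu(y,z)x\rangle$. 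Comparing coefficients again gives $\sigma=0$ and $p=\mu(y,z)x$, i.e. $\{x,y,z\}=\mu(y,z)x$.

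Combining these two identifications with the subalgebra assertion of Corollary \ref{cor} shows that $(\h,x*y=\rho(x)y,\{x,y,z\}=\mu(y,z)x)$ is a pre-Lie-Yamaguti algebra, as claimed. I do not expect a genuine obstacle here, since the substantive work—that a restriction of a pre-Lie-Yamaguti algebra to $\h$ is again one—has already been carried out in Corollary \ref{cor}. The only point demanding care is the bookkeeping inside the semidirect-product formulas \eqref{semi1}--\eqref{semi2}, in particular tracking the switch $\tau$ so that $(-\mu^*\tau)(z,y)\delta=-\mu^*(y,z)\delta$ produces the correct $\mu(y,z)x$ term rather than a spurious $\mu(z,y)x$; once the pairings are set up this way, the verification is a routine comparison of coefficients against arbitrary $\gamma,\delta\in\h^*$ and $z,w\in\h$.
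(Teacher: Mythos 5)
Your proposal is correct and follows essentially the same route as the paper: both unwind the defining relations \eqref{presy1} and \eqref{presy3} of the compatible pre-Lie-Yamaguti structure against the semidirect-product brackets \eqref{semi1}--\eqref{semi2} and the duality pairings \eqref{pair1}--\eqref{pair2}. The only cosmetic difference is that you pair against a general element $z+\gamma$ (resp.\ $w+\delta$) and thereby re-derive that $x*y$ and $\{x,y,z\}$ have no $\h^*$-component, whereas the paper takes this from Corollary \ref{cor} and pairs only with $\alpha\in\h^*$.
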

\begin{proof}
For all $x,y,z \in \h, \alpha \in \h^*$, we have
\begin{eqnarray*}
~ \langle\alpha,x*y\rangle&=&-\omega_p(x*y,\alpha)=\omega_p(y,[x,\alpha]_{\h\oplus\h^*})=\omega_p(y,\rho^*(x)\alpha)=-\langle\rho^*(x)\alpha,y\rangle\\
~ &=&\langle\alpha,\rho(x)y\rangle,\\
~ \langle\alpha,\{x,y,z\}\rangle&=&-\omega_p(\{x,y,z\},\alpha)=-\omega_p(x,\{\alpha,z,y\}_{\h\oplus\h^*})=\omega_p(x,\mu^*(y,z)\alpha)
=-\langle\mu^*(y,z)\alpha,x\rangle\\
~ &=&\langle\alpha,\mu(y,z)x\rangle.
\end{eqnarray*}
By the arbitrary of $\alpha$, the conclusion holds. This completes the proof.
\end{proof}
\emptycomment{
\begin{rmk}
Moreover, there also holds that
\begin{eqnarray*}
~\{x,y,z\}_D&=&D_{\rho,\mu}(x,y)z.\label{inv2}
\end{eqnarray*}
Indeed, by \eqref{left1}, there holds that
\begin{eqnarray*}
~ \langle\alpha,\{x,y,z\}_D\rangle&=&-\omega_p(\{x,y,z\}_D,\alpha)=\omega_p(z,\{x,y,\alpha\}_{\h\oplus\h^*})=\omega_p(z,D_{\rho,\mu}^*(x,y)\alpha)=-\langle D_{\rho,\mu}^*(x,y)\alpha,
z\rangle\\
~ &=&\langle\alpha,D_{\rho,\mu}(x,y)z\rangle.
\end{eqnarray*}
\end{rmk}}

\begin{defi}
A {\bf quadratic pre-Lie-Yamaguti algebra} is a pre-Lie-Yamaguti algebra $(A,*,\{\cdot,\cdot,\cdot\})$ equipped with a nondegenerate, skew-symmetric bilinear form $\omega\in \wedge^2 A^*$ such that the following invariant conditions hold for all $x,y,z,w \in A$,
\begin{eqnarray}
~ \omega(x*y,z)&=&-\omega(y,[x,z]_C),\label{inv1}\\
~ \omega(\{x,y,z\},w)&=&\omega(x,\Courant{w,z,y}_C).\label{inv3}
\end{eqnarray}
We denote a quadratic pre-Lie-Yamaguti algebra by $(A,*,\{\cdot,\cdot,\cdot\};\omega)$   in the sequel.
\end{defi}

\begin{lem}
Let $(A,*,\{\cdot,\cdot,\cdot\};\omega)$ be a quadratic pre-Lie-Yamaguti algebra. Then
\begin{eqnarray}
\omega(\{x,y,z\}_D,w)=-\omega(z,\Courant{x,y,w}_C).\label{inv2}
\end{eqnarray}
\end{lem}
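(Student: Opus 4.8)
The plan is to push everything into the second argument of $\omega$ by systematically applying the two invariant conditions \eqref{inv1} and \eqref{inv3}, and then to recognize the resulting identity as a consequence of the Lie-Yamaguti axiom \eqref{LY1} for the subadjacent algebra $A^c$. Recall that $A^c$ is a genuine Lie-Yamaguti algebra by Theorem \ref{subad}, so its ternary bracket $\Courant{\cdot,\cdot,\cdot}_C$ is skew-symmetric in its first two arguments and the relation \eqref{LY1} holds for $([\cdot,\cdot]_C,\Courant{\cdot,\cdot,\cdot}_C)$.

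First I would expand $\{x,y,z\}_D$ according to its definition \eqref{pre3}, so that $\omega(\{x,y,z\}_D,w)$ becomes $\omega(\{z,y,x\},w)-\omega(\{z,x,y\},w)+\omega((y,x,z),w)-\omega((x,y,z),w)$. The two ternary terms are dispatched directly by \eqref{inv3}, giving $\omega(z,\Courant{w,x,y}_C)$ and $\omega(z,\Courant{w,y,x}_C)$. For each associator term, say $\omega((x,y,z),w)=\omega((x*y)*z,w)-\omega(x*(y*z),w)$, I would apply \eqref{inv1} once to pull out the outermost $*$ and a second time on the nested product, thereby landing on expressions of the form $\omega(z,\,\cdot\,)$ in which the free variable $w$ is buried inside iterated commutators $[\cdot,\cdot]_C$. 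After combining all four pieces, every summand carries $z$ in the first slot of $\omega$.

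Collecting the results and using $-[y*x,w]_C+[x*y,w]_C=[[x,y]_C,w]_C$, the claim reduces to the single element identity
\begin{equation*}
\Courant{w,x,y}_C-\Courant{w,y,x}_C+[[x,y]_C,w]_C-[x,[y,w]_C]_C+[y,[x,w]_C]_C=-\Courant{x,y,w}_C
\end{equation*}
in $A^c$. Rewriting the double commutators via skew-symmetry as $[[y,w]_C,x]_C-[[x,w]_C,y]_C$ and invoking \eqref{LY1} for the triple $(x,y,w)$ replaces $[[x,y]_C,w]_C+[[y,w]_C,x]_C-[[x,w]_C,y]_C$ by $-\Courant{x,y,w}_C-\Courant{y,w,x}_C-\Courant{w,x,y}_C$, turning the left-hand side into $-\Courant{w,y,x}_C-\Courant{x,y,w}_C-\Courant{y,w,x}_C$; the first and last terms cancel because $\Courant{\cdot,\cdot,\cdot}_C$ is skew-symmetric in its first two arguments, leaving exactly $-\Courant{x,y,w}_C$. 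Feeding this back gives $\omega(\{x,y,z\}_D,w)=\omega(z,-\Courant{x,y,w}_C)=-\omega(z,\Courant{x,y,w}_C)$, as desired.

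I expect the main obstacle to be purely organizational: the associator terms force two successive applications of \eqref{inv1}, and one must track signs carefully through the nested commutators, especially when converting $[x,[y,w]_C]_C$ and $[y,[x,w]_C]_C$ into the $[[\cdot,w]_C,\cdot]_C$ form needed to match \eqref{LY1}. Once the problem is distilled to the displayed element identity, the remaining work is a short application of \eqref{LY1} together with the skew-symmetry of the ternary bracket, so no genuinely new computation beyond Theorem \ref{subad} is required; in particular nondegeneracy of $\omega$ is not even needed, since the conclusion follows from the element identity directly.
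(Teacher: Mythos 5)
Your proposal is correct and follows essentially the same route as the paper's proof: expand $\{x,y,z\}_D$ via \eqref{pre3}, transfer every term onto $z$ using \eqref{inv1} (twice for the associators) and \eqref{inv3}, and then reduce the resulting element identity to \eqref{LY1} for $A^c$ together with the skew-symmetry of $\Courant{\cdot,\cdot,\cdot}_C$ in its first two arguments. The sign bookkeeping in your sketch checks out, so no further changes are needed.
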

\begin{proof}
It is straightforward to deduce that
\begin{eqnarray*}
\omega(\{x,y,z\}_D,w)&\stackrel{\eqref{pre3}}{=}&\omega(\{z,y,x\},w)-\omega(\{z,x,y\},w)+\omega((y*x)*z,w)\\
~ &&-\omega(y*(x*z),w)-\omega((x*y)*z,w)+\omega(x*(y*z),w)\\
~ &\stackrel{\eqref{inv1},\eqref{inv3}}{=}&\omega(z,\Courant{w,x,y}_C)-\omega(z,\Courant{w,y,x}_C)-\omega(z,[y*x,w]_C)\\
~ &&-\omega(z,[x,[y,w]_C]_C)+\omega(z,[x*y,w]_C)+\omega(z,[y,[x,w]_C]_C)\\
~ &\stackrel{\eqref{subLY1}}{=}&\omega(z,\Courant{w,x,y}_C)+\omega(z,\Courant{y,w,x}_C)+\omega(z,[[x,y]_C,w]_C)\\
~ &&+\omega(z,[[y,w]_C,x]_C)+\omega(z,[[w,x]_C,y]_C)\\
~ &\stackrel{\eqref{LY1}}{=}&-\omega(z,\Courant{x,y,w}_C).
\end{eqnarray*}
The conclusion thus follows.
\end{proof}
In the sequel, we give the notion of Manin triples of pre-Lie-Yamaguti algebras, which corresponds to symplectic  Lie-Yamaguti algebras.
\begin{defi}
A {\bf Manin triple} of pre-Lie-Yamaguti algebra is a triple $(\mathscr{A};A,A')$, where
\begin{itemize}
\item [\rm (i)] $(\mathscr{A},*,\{\cdot,\cdot,\cdot\};\omega)$ is a quadratic pre-Lie-Yamaguti algebra;
\item [\rm (ii)] $\mathscr{A}=A\oplus A'$ as vector spaces, where $A$ and $A'$ are subalgebras of $\mathscr A$;
\item [\rm (iii)] both $A$ and $A'$ are isotropic;
\item [\rm (iv)] for all $x,y \in A$ and $\alpha,\beta \in A'$, there hold:
\begin{eqnarray*}
~ \{\alpha,\beta,x\}\in A,~\{x,\alpha,\beta\}\in A,~\{\alpha,x,\beta\} \in A,
\quad \{x,y,\alpha\}\in A',~\{\alpha,x,y\}\in A',~ \{x,\alpha,y\} \in A'.
\end{eqnarray*}
\end{itemize}
\end{defi}

In a Manin triple of pre-Lie-Yamaguti algebras, since the skew-symmetric bilinear form $\omega$ is nondegenerate, $A'$ is isomorphic to $A^*$ via
\begin{eqnarray*}
\langle\alpha,x\rangle\triangleq\omega(\alpha,x), \quad \forall x \in A, \alpha \in A'.
\end{eqnarray*}
Thus $\mathscr A$ is isomorphic to $A\oplus A^*$ naturally and the bilinear form $\omega=\omega_p$, i.e.
\begin{eqnarray}
\omega(x+\alpha,y+\beta)=\langle\alpha,y\rangle-\langle\beta,x\rangle, \quad \forall x,y\in A,\alpha,\beta \in A^*.\label{form}
\end{eqnarray}

\begin{thm}
There is a one-to-one correspondence between Manin triples of pre-Lie-Yamaguti algebras and perfect phase spaces of Lie-Yamaguti algebras. More precisely, if $(A\oplus A^*,A,A^*)$ is a Manin triple of pre-Lie-Yamaguti algebras, then $(A\oplus A^*,[\cdot,\cdot]_C,\Courant{\cdot,\cdot,\cdot}_C;\omega)$ is a symplectic Lie-Yamaguti algebra which is a phase space, where $\omega$ is given by \eqref{form}. Conversely, if $(\h\oplus \h^*, [\cdot,\cdot],\Courant{\cdot,\cdot,\cdot};\omega)$ is a perfect phase space of a Lie-Yamaguti algebra $(\h,[\cdot,\cdot]_{\h},\Courant{\cdot,\cdot,\cdot}_{\h})$, then $(\h\oplus \h^*,\h,\h^*)$ is a Manin triple of pre-Lie-Yamaguti algebras, where the pre-Lie-Yamaguti algebra structure is given by \eqref{presy1}-\eqref{presy3}.
\end{thm}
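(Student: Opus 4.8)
The plan is to prove the two implications separately and then observe that the constructions invert each other. Throughout I use the identification made just before the theorem: in a Manin triple $A'\cong A^*$ and $\omega=\omega_p$, so that $A$ and $A^*$ are each maximal isotropic (Lagrangian) for $\omega_p$. Concretely, by \eqref{ssym} one has $\omega_p(x+\alpha,y)=\langle\alpha,y\rangle$ for $y\in A$ and $\omega_p(x+\alpha,\beta)=-\langle\beta,x\rangle$ for $\beta\in A^*$, so an element of $A\oplus A^*$ lies in $A$ exactly when it pairs trivially with all of $A$, and lies in $A^*$ exactly when it pairs trivially with all of $A^*$. This Lagrangian duality is the device that converts every membership assertion in the perfectness axiom and in condition (iv) into a pairing identity, attackable with the invariance relations \eqref{inv1}, \eqref{inv3}, \eqref{inv2} and the isotropy of $A$ and $A^*$.

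For the direction from a Manin triple $(A\oplus A^*;A,A^*)$ to a perfect phase space, I first apply Theorem \ref{subad} to form the subadjacent Lie-Yamaguti algebra $\mathscr{A}^c=(A\oplus A^*,[\cdot,\cdot]_C,\Courant{\cdot,\cdot,\cdot}_C)$. Next I show $\omega_p$ is symplectic on it, which is the converse of Corollary \ref{presy}: since \eqref{inv1} and \eqref{inv3} are literally \eqref{presy1} and \eqref{presy3}, condition \eqref{syml} follows from \eqref{inv1} together with the skew-symmetry of $[\cdot,\cdot]_C=x*y-y*x$, while \eqref{sym2} is obtained from \eqref{inv3} and \eqref{inv2} after substituting the definition \eqref{subLY2} of $\Courant{\cdot,\cdot,\cdot}_C$. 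Because $A$ and $A^*$ are pre-Lie-Yamaguti subalgebras they are closed under $[\cdot,\cdot]_C$ and $\Courant{\cdot,\cdot,\cdot}_C$, hence are Lie-Yamaguti subalgebras, so $(\mathscr{A}^c,\omega_p)$ is a phase space of the sub-adjacent algebra $A^c$. For perfectness I expand each mixed $\Courant{\cdot,\cdot,\cdot}_C$-bracket through \eqref{subLY2} and \eqref{pre3}: the pure $\{\cdot,\cdot,\cdot\}$-terms land in the required summand by condition (iv), so perfectness reduces to showing that the residual $\{\cdot,\cdot,\cdot\}_D$ (equivalently, associator) contributions do too, which I verify by pairing against a test vector and applying \eqref{inv1} and \eqref{syml} to reduce to pairings that vanish by isotropy.

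For the converse, let $(\h\oplus\h^*,[\cdot,\cdot],\Courant{\cdot,\cdot,\cdot};\omega_p)$ be a perfect phase space. Corollary \ref{presy} supplies a compatible pre-Lie-Yamaguti structure $(*,\{\cdot,\cdot,\cdot\})$ satisfying \eqref{presy1}--\eqref{presy3}, whose subadjacent Lie-Yamaguti algebra is the given one; as \eqref{presy1} and \eqref{presy3} are exactly \eqref{inv1} and \eqref{inv3}, the form $\omega_p$ makes it a quadratic pre-Lie-Yamaguti algebra. Corollary \ref{cor} shows $\h$ and $\h^*$ are pre-Lie-Yamaguti subalgebras, and they are isotropic because $\omega_p$ vanishes on $\h\times\h$ and on $\h^*\times\h^*$ by \eqref{ssym}. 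It remains to produce condition (iv), and here the pairing argument runs cleanly: to see $\{\alpha,\beta,x\}\in\h$ for $x\in\h$, $\alpha,\beta\in\h^*$, I pair with arbitrary $y\in\h$ and use \eqref{inv3} to get $\omega_p(\{\alpha,\beta,x\},y)=\omega_p(\alpha,\Courant{y,x,\beta})$; perfectness gives $\Courant{y,x,\beta}\in\h^*$, so the pairing vanishes by isotropy of $\h^*$, whence $\{\alpha,\beta,x\}\in\h$. The other five memberships in (iv) follow by the same mechanism, each time rewriting the product via \eqref{inv1} or \eqref{inv3}, invoking the appropriate perfectness relation, and using isotropy.

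The hard part will be the forward-direction perfectness: the invariance relations send each membership back to a bracket of the same type, so that the four conditions form a system that threatens to be circular, and condition (iv) controls only the pure products $\{\cdot,\cdot,\cdot\}$, not the derived products $\{\cdot,\cdot,\cdot\}_D$ nor the mixed binary products entering through the associator, which must be pinned down separately with \eqref{inv1}, \eqref{syml} and isotropy. A conceptually cleaner alternative I would also pursue is to identify $\mathscr{A}^c$ with the semidirect product $A^c\ltimes_{L^*,-\huaR^*\tau}A^*$ of Theorem \ref{phase}, whose mixed ternary brackets are manifestly valued in the correct summand, so that perfectness becomes automatic. Finally, the correspondence is one-to-one because the pre-Lie-Yamaguti structure attached to a symplectic form by \eqref{presy1}--\eqref{presy3} is unique and its subadjacent Lie-Yamaguti algebra reproduces the symplectic one, so the two constructions are mutually inverse.
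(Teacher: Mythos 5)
Your proposal follows the paper's own route for everything the paper actually writes down: the converse direction is verbatim the paper's argument (Corollary \ref{presy} for the compatible pre-Lie-Yamaguti structure, Corollary \ref{cor} for the subalgebras, isotropy from \eqref{ssym}, and the pairing computation $\omega_p(\{\alpha,\beta,x\},y)=\omega_p(\alpha,\Courant{y,x,\beta})=0$ for condition (iv)), and your forward-direction verification that $\omega$ satisfies \eqref{syml} and \eqref{sym2} from \eqref{inv1}, \eqref{inv3} and \eqref{inv2} is exactly the paper's computation. Those parts are correct.

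The one place you go beyond the paper is the forward-direction perfectness, and there your ``conceptually cleaner alternative'' is wrong: in a Manin triple $A^*$ is only required to be a pre-Lie-Yamaguti \emph{subalgebra}, not an abelian one, so the subadjacent Lie-Yamaguti algebra of $\mathscr{A}=A\oplus A^*$ is in general not the semidirect product $A^c\ltimes_{L^*,-\huaR^*\tau}A^*$ of Theorem \ref{phase}, whose $A^*$-summand carries the zero products. That identification holds only when all products on $A^*$ vanish, so perfectness does not become ``automatic''. Your primary route (expand $\Courant{\cdot,\cdot,\cdot}_C$ via \eqref{subLY2} and \eqref{pre3}, dispose of the pure ternary terms by condition (iv), and pair the rest against a test vector) is the right idea but is not closed: for instance, testing $\Courant{\alpha,\beta,x}_C$ against $z\in A$ leaves the associator contribution $\omega\bigl(\alpha*(\beta*x)-\beta*(\alpha*x)-[\alpha,\beta]_C*x,\,z\bigr)$, which two applications of \eqref{inv1} followed by \eqref{LY1} convert into $-\omega\bigl(x,\Courant{\alpha,\beta,z}_C+\Courant{\beta,z,\alpha}_C+\Courant{z,\alpha,\beta}_C\bigr)$ --- i.e.\ precisely the membership statements you are in the middle of proving, so the scheme is circular exactly as you feared. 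To be fair, the paper itself silently skips this point (it merely asserts the result ``is a phase space''), so the gap is inherited rather than introduced; but a complete argument needs either a non-circular resolution of the coupled system of the four membership conditions or a direct determination of the mixed binary products $x*\alpha$ and $\alpha*x$ from the invariance conditions, neither of which your sketch supplies.
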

\begin{proof}
Let $(A\oplus A^*,A,A^*)$ be a Manin triple of pre-Lie-Yamaguti algebras. Then there is a sub-adjacent Lie-Yamaguti algebra structure $([\cdot,\cdot]_C, \Courant{\cdot,\cdot,\cdot}_C)$ on $A\oplus A^*$. We claim that the bilinear form $\omega$ given by \eqref{form} is a symplectic structure of the Lie-Yamaguti algebra $(A\oplus A^*,[\cdot,\cdot]_C,\Courant{\cdot,\cdot,\cdot}_C)$. In fact, for all $\huaX,\huaY,\huaZ,\huaW \in A\oplus A^*$, by the invariance of $\omega$, we have
\begin{eqnarray*}
~ &&\omega(\huaZ,\Courant{\huaX,\huaY,\huaW}_C)-\omega(\huaX,\Courant{\huaW,\huaZ,\huaY}_C)+\omega(\huaY,\Courant{\huaW,\huaZ,\huaX}_C)-\omega(\huaW,\Courant{\huaX,\huaY,\huaZ}_C)\\
~ &\stackrel{\eqref{inv3},\eqref{inv2}}{=}&-\omega(\{\huaX,\huaY,\huaZ\}_D,\huaW)-\omega(\{\huaX,\huaY,\huaZ\},\huaW)+\omega(\{\huaY,\huaX,\huaZ\},\huaW)-\omega(\huaW,\Courant{\huaX,\huaY,\huaZ}_C)\\
~&\stackrel{\eqref{subLY1}}{=}&0,
\end{eqnarray*}
and
\begin{eqnarray*}
~ &&\omega(\huaX,[\huaY,\huaZ]_C)+\omega(\huaY,[\huaZ,\huaX]_C)+\omega(\huaZ,[\huaX,\huaY]_C)\\
~ &\stackrel{\eqref{inv1}}{=}&-\omega(\huaY*\huaX,\huaZ)+\omega(\huaX*\huaY,\huaZ)+\omega(\huaZ,[\huaX,\huaY]_C)\\
~ &\stackrel{\eqref{subLY2}}{=}&0,
\end{eqnarray*}
which implies that $\omega$ is a symplectic structure on the Lie-Yamaguti algebra $A\oplus A^*$. It is obvious that it is a phase space of the subadjacent Lie-Yamaguti algebra of the pre-Lie-Yamaguti algebra  $({A},*|_A,\{\cdot,\cdot,\cdot\}|_A)$.
 \emptycomment{then by the above formula, for all $x,y,z,w \in A$, $\alpha,\beta,\gamma, \delta \in A^*$, we have
\begin{eqnarray*}
~&& \omega_p(z+\gamma,\{x+\alpha,y+\beta,w+\delta\}_C)\\
~ &=&\omega_p(z+\delta,\{x,y,z\}_A+\huaL^*(\alpha,\beta)w-\huaR^*(\delta,\beta)x+\huaR(\delta,\alpha)y\\
~ &&+\{\alpha,\beta,\gamma\}_{A^*}+\mathfrak{L}^*(x,y)\delta-\mathfrak{R}^*(w,y)\alpha+\mathfrak{R}^*(w,x)\beta)\\
~ &=&\langle\gamma,\{x,y,z\}_A+\huaL^*(\alpha,\beta)w-\huaR^*(\delta,\beta)x+\huaR(\delta,\alpha)y\rangle\\
~ &&-\langle\{\alpha,\beta,\gamma\}_{A^*}+\mathfrak{L}^*(x,y)\delta-\mathfrak{R}^*(w,y)\alpha+\mathfrak{R}^*(w,x)\beta,z\rangle\\
~ &=&\langle\gamma,\{x,y,z\}_A\rangle-\langle\{\alpha,\beta,\gamma\}_L,w\rangle+\langle\{\gamma,\delta,\beta\}_R,x\rangle-\langle\{\gamma,\delta,\alpha\}_R,y\rangle\\
~ &&-\langle\{\alpha,\beta,\gamma\}_{A^*},z\rangle+\langle\delta,\{x,y,z\}_L\rangle-\langle\alpha,\{z,w,y\}_R\rangle+\langle,\beta,\{z,w,x\}_R\rangle.
\end{eqnarray*}
Similarly, we have
\begin{eqnarray*}
  ~ &&-\omega_p(x+\alpha,\{w+\delta,z+\gamma,y+\beta\}_C)\\
  ~ &=&-\langle\alpha,\{w,z,y\}_A\rangle+\langle\{\delta,\gamma,\alpha\}_L,y\rangle-\langle\{\alpha,\beta,\gamma\}_R,w\rangle+\langle\{\alpha,\beta,\delta\}_R,z\rangle\\
  ~ &&+\langle\{\delta,\gamma,\beta\}_{A^*},x\rangle-\langle\beta,\{w,z,x\}_L\rangle+\langle\delta,\{x,y,z\}_R\rangle-\langle\gamma,\{x,y,w\}_R\rangle,\\
  ~ &&\omega_p(y+\beta,\{w+\delta,z+\gamma,x+\alpha\}_C)\\
  ~ &=&\langle\beta,\{w,z,x\}_A\rangle-\langle\{\delta,\gamma,\beta\}_L,x\rangle+\langle\{\beta,\alpha,\gamma\}_R,w\rangle-\langle\{\beta,\alpha,\gamma\}_R,z\rangle\\
  ~ &&-\langle\{\delta,\gamma,\alpha\}_{A^*},y\rangle+\langle\alpha,\{w,z,y\}_L\rangle-\langle\delta,\{y,x,z\}_R\rangle+\langle\gamma,\{y,x,w\}_R\rangle,\\
  ~ &&-\omega_p(w+\delta,\{x+\alpha,y+\beta,z+\gamma\}_C)\\
   &=&-\langle\delta,\{x,y,z\}_A\rangle+\langle\{\alpha,\beta,\delta\}_L,z\rangle-\langle\{\delta,\gamma,\beta\}_R,x\rangle+\langle\{\delta,\gamma,\alpha\}_R,y\rangle\\
  ~ &&+\langle\{\alpha,\beta,\gamma\}_{A^*},w\rangle-\langle\gamma,\{x,y,w\}_L\rangle+\langle\alpha,\{w,z,y\}_R\rangle-\langle\beta,\{w,z,x\}_R\rangle.
\end{eqnarray*}
Therefore, we have
\begin{eqnarray*}
~ &&\omega_p(z+\gamma,\{x+\alpha,y+\beta,w+\delta\}_C)-\omega_p(x+\alpha,\{w+\delta,z+\gamma,y+\beta\}_C)\\
~ &&+\omega_p(y+\beta,\{w+\delta,z+\gamma,x+\alpha\}_C)-\omega_p(w+\delta,\{x+\alpha,y+\beta,z+\gamma\}_C)=0.
\end{eqnarray*}
Moreover, by the similar method, we have
\begin{eqnarray*}
\omega_p(x+\alpha,[y+\beta,z+\gamma]_C)+c.p.=0.
\end{eqnarray*}
Thus we deduce that $\omega_p$ is a symplectic structure on the Lie-Yamaguti algebra $(A\oplus A^*,[\cdot,\cdot]_C,\{\cdot,\cdot,\cdot\}_C,\omega_p)$.}

Conversely, let $(\h\oplus \h^*, [\cdot,\cdot],\Courant{\cdot,\cdot,\cdot};\omega)$ be a perfect phase space of a Lie-Yamaguti algebra $(\h,[\cdot,\cdot]_{\h},\Courant{\cdot,\cdot,\cdot}_{\h})$. Then there is a compatible pre-Lie-Yamaguti algebra structure on $\h\oplus\h^*$ given by \eqref{presy1}-\eqref{presy3}. By Corollary \ref{cor}, $\h$ and $\h^*$ are pre-Lie-Yamaguti subalgebras of $(\h\oplus \h^*,*, \{\cdot,\cdot,\cdot\})$. It is obvious that both $\h$ and $\h^*$ are isotropic. Moreover, for all $x,y \in \h, \alpha,\beta \in \h^*$, we have
\begin{eqnarray*}
\omega(\{\alpha,\beta,x\},y)=\omega(\alpha,\Courant{y,x,\beta})=0,
\end{eqnarray*}
which implies that $\{\alpha,\beta,x\} \in \h$. Other conditions can be obtained similarly, thus $(\h\oplus\h^*,\h,\h^* )$ is a Manin triple of pre-Lie-Yamaguti algebras.
\end{proof}

  It is straightforward to see that the above result reduces to the result given in \cite{BCM1} when a Lie-Yamaguti algebra reduces to a Lie algebra (i.e. the trinary multiplication vanishes). On the other hand, a   Lie-Yamaguti algebra reduces to a Lie triple system if the binary multiplication vanishes, it follows that the above result can be adapted to the context of Lie triple systems. We omit the details and leave it to readers.

\end{document}